\title[Regularization of Rational Group Actions]{Regularization of Rational Group Actions}
\author{Hanspeter Kraft}
\address{Departement Mathematik und Informatik, Universit\"at Basel
\newline\indent Spiegelgasse~1, CH-4051 Basel}
\email{Hanspeter.Kraft@unibas.ch}
\date{October 2016, with additions from June 2018} 
\newtheorem{thm}{Theorem}
\newtheorem*{thm*}{Theorem}
\newtheorem*{conj*}{Conjecture}
\newtheorem{prop}{Proposition}
\newtheorem{lem}{Lemma}
\newtheorem*{lem*}{Lemma}
\newtheorem{cor}{Corollary}
\newtheorem*{cor*}{Corollary}
\theoremstyle{definition}
\newtheorem{defn}{Definition}
\theoremstyle{remark}
\newtheorem*{rem*}{Remark}
\newtheorem{rem}{Remark}
\DeclareMathOperator{\Char}{char}
\DeclareMathOperator{\id}{id}
\DeclareMathOperator{\PSL}{PSL}
\DeclareMathOperator{\Aut}{Aut}
\DeclareMathOperator{\BR}{\mathcal{BR}}
\DeclareMathOperator{\pr}{pr}
\DeclareMathOperator{\Bir}{Bir}
\DeclareMathOperator{\Dom}{Dom}
\DeclareMathOperator{\Breg}{Breg}
\newcommand{\into}{\hookrightarrow}
\newcommand{\simto}{\xrightarrow{\sim}}
\newcommand{\PP}{{\mathbb P}}
\newcommand{\PSLtwo}{\PSL_{2}}
\newcommand{\Aone}{\AA^{1}}
\newcommand{\Atwo}{\AA^{2}}
\newcommand{\OOO}{\mathcal O}
\newcommand{\be}{\begin{enumerate}}
\newcommand{\ee}{\end{enumerate}}
\newcommand{\name}[1]{\textsc{#1\/}}
\newcommand{\Ptwo}{{\PP^{2}}}
\newcommand{\Pone}{{\PP^{1}}}
\newcommand{\dto}{\dashrightarrow}
\newcommand{\bk}{\Bbbk}
\newcommand{\kst}{\bk^{*}}
\newcommand{\ps}{\par\smallskip}
\renewcommand{\subset}{\subseteq}
\newcommand{\Ga}{G_{a}}
\newcommand{\Xr}{X_{\text{\rm reg}}}
\newcommand{\bX}{\tilde X}
\newcommand{\Xnull}{X^{(0)}}
\newcommand{\Xone}{X^{(1)}}
\newcommand{\bXi}{{\tilde X}^{(i)}}
\newcommand{\Xj}{X^{(j)}}
\newcommand{\bXj}{{\tilde X}^{(j)}}
\newcommand{\bXnull}{{\tilde X}^{(0)}}
\newcommand{\Xm}{X^{(m)}}
\newcommand{\rhoi}{\rho^{(i)}}
\newcommand{\brho}{\bar\rho}
\newcommand{\trho}{\tilde\rho}
\begin{document}

\begin{abstract}
We give a modern proof of the Regularization Theorem of \name{Andr\'e Weil} which says that for every
rational action of an algebraic group $G$ on a variety $X$ there exist a variety $Y$ with a regular action of $G$ and 
a $G$-equivariant birational map $X \dto Y$. Moreover, we show that a rational action of $G$ on an affine variety $X$ with the property that each $g$ from a dense subgroup of $G$ induces a regular automorphism of $X$, is a regular action.
\end{abstract}

\maketitle

\addtocounter{section}{1}

The aim of this note is to give a modern proof of the following {\it Regularization Theorem\/} due to \name{Andr\'e Weil}, see \cite{We1955On-algebraic-group}. We will follow the approach in \cite{Za1995Regularization-of-}. Our base field $\bk$ is algebraically closed. A {\it variety\/} is an algebraic $\bk$-variety,  and an {\it algebraic group\/} is an algebraic $\bk$-group.

\begin{thm}\label{thm1}
Let $G$ be an algebraic group and $X$ a variety with a rational action of $G$. Then there exists a variety $Y$ with a regular action of $G$ and a birational $G$-equivariant morphism $\phi\colon X \dto Y$.
\end{thm}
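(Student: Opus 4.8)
The plan is to regularize the action by covering $G$ with finitely many translates of a neighborhood of the identity and gluing together the corresponding birational translates of a dense open subset of $X$. First I would analyze the action near $e\in G$. Writing $a\colon G\times X\dto X$ for the rational action, consider the self-map $\alpha\colon G\times X\dto G\times X$, $(g,x)\mapsto(g,a(g,x))$; it is birational with inverse $(g,y)\mapsto(g,a(g^{-1},y))$, and on $\{e\}\times X$ it restricts to the identity by the identity axiom $a(e,x)=x$. Hence both $\alpha$ and $\alpha^{-1}$ are defined along a dense open part of $\{e\}\times X$, so $\alpha$ is an isomorphism on a neighborhood of it. After shrinking I obtain a dense open $\Xr\subseteq X$ and an open $U\subseteq G$ with $e\in U$ such that $a$ is a morphism on $U\times\Xr$ and, for each $g\in U$, the translation $\theta_g:=a(g,-)$ is an open immersion of $\Xr$ into $X$. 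This local regularity near the identity is what makes the whole construction run.

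Next I would glue. Cover $G$ by finitely many left translates $g_1U,\dots,g_nU$ (possible since $G$ is quasi-compact), take $n$ copies $\Xr^{(1)},\dots,\Xr^{(n)}$ of $\Xr$, and glue $\Xr^{(i)}$ to $\Xr^{(j)}$ along the birational transition map $\theta_{g_j^{-1}g_i}=a(g_j^{-1}g_i,-)$. The gluing is consistent on triple overlaps because the cocycle identity $\theta_{g_k^{-1}g_j}\circ\theta_{g_j^{-1}g_i}=\theta_{g_k^{-1}g_i}$ is exactly the associativity $a(g,a(h,x))=a(gh,x)$ of the rational action, and $\theta_e=\id$. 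This produces a variety $Y$ together with an open embedding of the first chart, hence a birational map $\phi\colon X\dto Y$. I would then define the $G$-action on $Y$ by sending a point in chart $i$ into chart $j$ via $a(g_j^{-1}gg_i,-)$ whenever $g_j^{-1}gg_i\in U$; such a $j$ exists because the $g_jU$ cover $G$, well-definedness and regularity follow once more from associativity together with the local regularity of Step 1, and one checks directly from the construction that $\phi$ is $G$-equivariant.

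The hard part will be that the transition maps $\theta_{g_j^{-1}g_i}$ and the translation maps used to define the action must be genuine morphisms, and the cocycle identities must hold on the nose, simultaneously on a single coherent system of open subsets of the charts. A priori the $\theta$'s are only birational and the required identities hold only generically, so $\Xr$ must be shrunk compatibly for all finitely many indices at once; I expect this to require an iterative refinement $X^{(0)}\supseteq X^{(1)}\supseteq\cdots\supseteq X^{(m)}$ until every needed associativity relation is realized by honest isomorphisms of open subvarieties. A second, related subtlety is that gluing opens along birational maps can destroy separatedness, so I would have to verify that $Y$ is separated — using that $X$ is separated and that all transition maps are restrictions of the single rational map $a$ — before concluding that $Y$ is a variety carrying a regular $G$-action.
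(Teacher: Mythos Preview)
Your overall architecture—extract a good dense open $\Xr$ and glue finitely many copies indexed by $g_0,\dots,g_m$ with transitions $\rho_{g_j^{-1}g_i}$—is exactly the paper's, and your Step~1 (that $\tilde\rho$ is biregular on some $U\times\Xr$ with $e\in U$) is correct. But the paper does \emph{not} shrink iteratively to make the transitions globally defined. Instead it takes $\Xr$ to be the set of \emph{$G$-regular} points: those $x$ for which $\tilde\rho$ is biregular in $(g,x)$ for a \emph{dense} set of $g\in G$, not merely for $g$ near $e$. One proves (Proposition~\ref{main.prop}, Corollary~\ref{reduction-to-Xreg.cor}) that on this $\Xr$ every point is again $G$-regular for the restricted action, and from this two lemmas follow: (i) on $\Xr$, whenever $\rho_g$ is defined in $x$ it is already biregular there (Lemma~\ref{def-bireg.lem}); (ii) the graph of each $\rho_g$ is \emph{closed} in $\Xr\times\Xr$ (Lemma~\ref{closed-graph.lem}). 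One then glues the copies $X^{(i)}$ of $\Xr$ along the full domain of definition of each $\rho_{g_j^{-1}g_i}$: lemma~(i) makes these partial transitions open immersions, and lemma~(ii) is precisely separatedness of the glued space.

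Your plan has a genuine gap at the separatedness step. Iterative shrinking could at best arrange the finitely many cocycle relations, but after passing to some $\Xr'\subsetneq\Xr$ the images $\theta_h(\Xr')$ for $h\in U$ need not land back in $\Xr'$, so the action you write down does not obviously map chart $i$ into chart $j$. More seriously, your one line on separatedness (``all transition maps are restrictions of the single rational map $a$'') is not a proof: the closure of $\Gamma(\rho_g)$ may contain points $(x_0,y_0)$ with $x_0\notin\Dom(\rho_g)$, and the paper's argument for Lemma~\ref{closed-graph.lem} genuinely uses $G$-regularity at \emph{both} $x_0$ and $y_0$ to find an $h$ with $\rho_{hg}$ biregular at $x_0$ and $\rho_h$ biregular at $y_0$, thereby straightening the closed graph onto the diagonal. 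Your neighborhood-of-$e$ version of $\Xr$ does not supply such an $h$ when $g$ lies far from $U$, so this is the missing idea.
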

We do not assume that $G$ is linear or connected, nor that $X$ is irreducible. This creates some complications in the arguments. The reader is advised to start with the case where $G$ is connected and $X$ irreducible in a first reading. 
\ps
We cannot expect that the birational map $\phi$ in the theorem is a morphism. Take the standard Cremona involution $\sigma$ of $\Ptwo$, given by $(x:y:z)\mapsto (\frac{1}{x},\frac{1}{y},\frac{1}{z})$, which collapses the coordinate lines to points. This cannot happen if $\sigma$ is a regular automorphism. However, removing these lines, we get $\kst \times \kst$ where $\sigma$ is a well-defined automorphism. 

More generally, consider the rational action of $G:=\PSLtwo\times\PSLtwo$ on $\Ptwo$ induced by the birational isomorphism $\Pone\times\Pone \dto \Ptwo$. Then neither an open set carries a regular $G$-action, nor $\Ptwo$ can be embedded into a variety $Y$ with a regular $G$-action. 

\ps
As we will see in the proof below, one first constructs a suitable open set $U \subset X$ where the rational action of $G$ has very specific properties, and then one shows that $U$ can be equivariantly embedded into a variety $Y$ with a regular $G$-action.

\ps
%%%%%%%%%%%%%%%%%%%%%%%%%%%%%%%%%%
\subsection{Rational maps} 
We first have to define and explain the different notion used in the theorem above.
We refer to \cite{Bl2016Algebraic-structur} for additional material and more details. 

Recall that a {\it rational map} $\phi\colon X \dto Y$ between two varieties  $X,Y$  is an equivalence class of pairs $(U,\phi_U)$ where $U \subset X$ is an open dense subset and $\phi_U \colon U \to Y$  a morphism. Two such pairs $(U,\phi_{U})$ and $(V,\phi_{V})$ are equivalent if $\phi_{U}|_{U\cap V} = \phi_{V}|_{U\cap V}$. We say that $\phi$ is {\it defined in $x\in X$} if there is a $(U,\phi_{U})$ representing $\phi$ such that $x \in U$. The set of all these points forms an open dense subset $\Dom(\phi)\subseteq X$ called the {\it domain of definition of $\phi$}. We will shortly say that {\it $\phi$ is defined in $x$} if $x \in\Dom(\phi)$.

For all $(U,\phi_{U})$ representing $\phi\colon X \dto Y$ the closure $\overline{\phi_{U}(U)} \subset Y$ is the same closed subvariety of $Y$. We will call it the {\it closed image} of $\phi$ and denote it by $\overline{\phi(X)}$. 
The rational map $\phi$ is called {\it dominant\/} if $\overline{\phi(X)}=Y$. It follows that the composition $\psi\circ\phi$ of two rational maps $\phi\colon X \dto Y$ and $\psi\colon Y \dto Z$ is a well-defined rational map $\psi\circ\phi\colon X \dto Z$ in case $\phi$ is dominant.

A rational map $\phi\colon X \dto Y$ is called {\it birational\/} if it is dominant and admits an inverse $\psi\colon Y \dto X$, $\psi\circ\phi = \id_{X}$. It then follows that $\psi$ is also dominant and that $\phi\circ \psi = \id_{Y}$. Clearly, $\psi$ is well-defined by $\phi$, and we shortly write $\psi=\phi^{-1}$. It is easy to see that $\phi$ is birational if and only if there is a $(U, \phi_{U})$ representing $\phi$ such that $\phi_{U}\colon U \into Y$ is an open immersion with a dense image. The set of birational maps $\phi\colon X \dto X$ is a group under composition which will be denoted by $\Bir(X)$.

A rational map $\phi\colon X \dto Y$ is called {\it biregular in $x$\/} if there is an open neighborhood $U \subset \Dom(\phi)$ of $x$ such that $\phi|_{U}\colon U \into Y$ is an open immersion. It follows that the subset $X':=\{x \in X \mid \phi \text{ is biregular in }x\}$ is open in $X$, and the induced morphism $\phi\colon X' \into Y$ is an open immersion. This implies the following result.
\begin{lem}\label{Breg.lem}
Let $\phi\colon X \dto Y$  be a birational map. Then the set
$$
\Breg(\phi):=\{x \in X \mid \phi\text{ is biregular in }x\}
$$
is open and dense in $X$.
\end{lem}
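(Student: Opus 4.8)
The plan is to split the assertion into its two halves and dispatch density via the open-immersion characterization of birational maps. Openness is already in hand: the discussion immediately preceding the lemma shows that the set $X' = \{x \in X \mid \phi \text{ is biregular in } x\}$, which is precisely $\Breg(\phi)$, is open in $X$. So the only genuine content is that this open set is dense, and here is where the hypothesis that $\phi$ is birational (rather than merely rational) has to be used.

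For density I would invoke the characterization stated earlier in the excerpt: since $\phi$ is birational, there is a representative $(U,\phi_{U})$ of $\phi$ with $\phi_{U}\colon U \into Y$ an open immersion with dense image. First I would observe that $U \subseteq \Dom(\phi)$, which is immediate because $U$ is the domain of a representing pair, so $\phi$ is defined at every point of $U$. Then for any $x \in U$, the set $U$ itself serves as an open neighborhood of $x$ contained in $\Dom(\phi)$ on which $\phi$ restricts to the open immersion $\phi_{U}$; by the definition of biregularity this means $\phi$ is biregular in $x$. Hence $U \subseteq \Breg(\phi)$, and since $U$ is open and dense in $X$, so is $\Breg(\phi)$.

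As for the main obstacle: there is essentially none beyond careful bookkeeping with the definitions. The one point deserving attention is the containment $U \subseteq \Dom(\phi)$ required by the definition of ``biregular in $x$'', but this holds automatically for the $U$ arising from a representing pair. The substantive input is entirely the birationality, which supplies the globally open-immersion representative; without it---for instance, for a constant map from a positive-dimensional variety---the set of biregular points can be empty, so density genuinely relies on this hypothesis.
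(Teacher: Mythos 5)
Your proposal is correct and follows exactly the route the paper takes (implicitly, since the lemma is stated as an immediate consequence of the preceding discussion): openness comes from the definition of biregularity, and density comes from the characterization of birational maps via a representing pair $(U,\phi_{U})$ with $\phi_{U}$ an open immersion on the dense open set $U$, which forces $U\subseteq\Breg(\phi)$. No issues.
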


\begin{rem}
If $X$ is irreducible, a rational dominant map $\phi\colon X \dto Y$ defines a $\bk$-linear inclusion $\phi^{*}\colon \bk(Y) \into \bk(X)$ of fields. Conversely, for every inclusion $\alpha \colon  \bk(Y) \into \bk(X)$ of fields there is a unique dominant rational map $\phi\colon X \dto Y$ such that $\phi^{*}=\alpha$.  In particular, we have an isomorphism $\Bir(X) \simto \Aut_{\bk}(\bk (X))$ of groups, given by $\phi\mapsto (\phi^{*})^{-1}$.
\end{rem}

\ps
%%%%%%%%%%%%%%%%%%%%%%%%%%%%%%%%%%%
\subsection{Rational group actions}
\begin{defn} Let $X,Z$ be varieties.
A map $\phi \colon Z \to \Bir(X)$ is called a {\it morphism} if there is an open dense set $U \subset Z \times X$ with the following properties:
\be
\item[(i)]
The induced map $(z,x)\mapsto \phi(z)(x)\colon U \to X$ is a morphism of varieties.
\item[(ii)]
For every $z \in Z$ the open set $U_{z}:=\{x\in X\mid (z,x)\in U\}$ is dense in $X$.
\item[(iii)]
For every $z \in Z$ the birational map $\phi(z)\colon X \dto X$ is defined in $U_{z}$.
\ee
\end{defn}
Equivalently, we have a rational map $\Phi\colon Z \times X \to X$ such that, for every $z \in Z$,
\be
\item[(i)] 
the open subset $\Dom(\Phi) \cap (\{z\}\times X) \subseteq \{z\}\times X$ is dense, and 
\item[(ii)]
the induced rational map $\Phi_{z}\colon X \dto X$, $x \mapsto\Phi(z,x)$, is birational.
\ee

This definition allows to define the \name{Zariski}-topology on $\Bir(X)$ in the following way. 

\begin{defn} 
A subset $S \subseteq \Bir(X)$ is {\it closed\/} if for every morphism $\rho\colon Z \to \Bir(X)$ the inverse image $\rho^{-1}(S) \subseteq Z$ is closed.
\end{defn}

Now we can define rational group actions on varieties. Let $G$ be an algebraic group and let $X$ be a variety.
\begin{defn}
A {\it rational action of $G$ on $X$} is a morphism $\rho\colon G \to \Bir(X)$ which is a homomorphism of groups. 
\end{defn}
As we have seen above this means that we have a rational map (denoted by the same letter) $\rho\colon G\times X \dto X$ such that the following holds:
\be
\item
$\Dom(\rho)\cap (\{g\}\times X)$ is dense in $\{g\}\times X$ for all $g\in G$, 
\item
the induced rational map $\rho_{g}\colon X \dto X$, $x\mapsto \rho(g,x)$, is birational, 
\item
the map $g\mapsto \rho_{g}$ is a homomorphism of groups.
\ee

If $\rho$ is defined in $(g,x)$ and $\rho(g,x)= y$ we will say that {\it $g\cdot x$ is defined and $g\cdot x = y$.}

We will also use the birational map
$$
\tilde\rho\colon G \times X \dto G \times X, \quad (g,x)\mapsto (g,\rho(g,x)),
$$
see section~\ref{G-regular-points.subsec} below.

\begin{rem}
Note that if $\rho\colon G \times X \dto X$ is defined in $(g,x)$, then $\rho_{g}\colon X \dto X$ is defined in $x$, but  the reverse implication does not hold. An example is the following. Consider the regular action of the additive group $\Ga$ on the plane $\Atwo=\bk^{2}$ by translation along the $x$-axis: $s\cdot x := x +(s,0)$ for $s \in \Ga$ and $x \in \Atwo$. Let $\beta\colon X\to \Atwo$ be the blow-up of $\Atwo$ in the origin. Then we get a rational $\Ga$-action on $X$, $\rho\colon \Ga\times X \dto X$. It is not difficult to see that  $\rho$ is defined in $(e,x)$ if and only if $\beta(x)\neq 0$, i.e. $x$ does not belong to the exceptional fiber, but clearly, $\rho_{e}=\id$ is defined everywhere.
\end{rem}

If $\phi\colon Z \to \Bir(X)$ is a morphism such that $\phi(Z) \subset \Aut(X)$, the group of regular automorphisms, one might conjecture that 
the induced map $Z \times X \to X$ is a morphism. I don't know how to prove this, but maybe the following holds.

\begin{conj*}
Let $\rho \colon G \to \Bir(X)$ be a rational action. If $\rho(G) \subset \Aut(X)$, then $\rho$ is a regular action.
\end{conj*}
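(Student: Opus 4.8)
The plan is to reduce the conjecture to a statement about the indeterminacy locus of $\rho$ and then to exploit the hypothesis $\rho(G)\subset\Aut(X)$ through a translation symmetry. First I would record the following structural observation. For $a\in G$ the map $\Theta_{a}\colon G\times X \to G\times X$, $(g,x)\mapsto (ga^{-1},\rho_{a}(x))$, is a \emph{regular} automorphism of $G\times X$, because $\rho_{a}\in\Aut(X)$ by hypothesis. Using the cocycle identity $\rho_{ga^{-1}}\circ\rho_{a}=\rho_{g}$ one checks that $\rho\circ\Theta_{a}=\rho$ as rational maps, and since $\Theta_{a}$ is biregular on the source this forces $\Theta_{a}(\Dom(\rho))=\Dom(\rho)$. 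As $\Theta_{a}\Theta_{b}=\Theta_{ab}$, the domain $D:=\Dom(\rho)$ and its closed complement $C:=(G\times X)\smallsetminus D$ are invariant under the abstract group $\{\Theta_{a}\mid a\in G\}$. In particular $C$ is a union of orbits, each orbit meets every fibre $\{g\}\times X$ exactly once, and the fibres of $C\to G$ satisfy $C_{g}=\rho_{g}^{-1}(C_{0})$ with $C_{0}:=C\cap(\{e\}\times X)$; thus all fibres of $C$ are isomorphic, via the automorphisms $\rho_{g}$, to the fixed proper closed subset $C_{0}\subsetneq X$. The conjecture is now equivalent to $C_{0}=\emptyset$.

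Next I would try to prove $C_{0}=\emptyset$ in the two cases where the automorphism group is understood. If $X$ is \emph{complete}, I would pass to the closure $\Gamma:=\overline{\{(g,x,\rho_{g}x)\}}\subset G\times X\times X$ of the graph; the projection $\pr_{12}\colon\Gamma\to G\times X$ is then proper and birational, its target is normal, and each slice over $\{g\}\times X$ contains the graph of the automorphism $\rho_{g}$. By Zariski's Main Theorem it suffices to show that these slices contain \emph{nothing more}, i.e.\ that $(g,x)\mapsto\rho_{g}(x)$ is continuous; this I would deduce from representability of the automorphism functor of a proper variety by a group scheme $\underline{\Aut}(X)$ locally of finite type, together with the classical fact that a rational homomorphism of algebraic groups is automatically a morphism. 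The rational action then corresponds to a rational, hence regular, homomorphism $G\to\underline{\Aut}(X)$, and the action $G\times X\to X$ is regular. If $X$ is \emph{affine} I would instead follow the function-theoretic route already used in the paper for the dense-subgroup theorem: for $f\in\OOO(X)$ the pullback $\rho^{*}f$ is a rational function on $G\times X$ whose specialisation at each $g$ lies in $\OOO(X)$, and one shows, by controlling the polar locus through the group law, that $\rho^{*}f\in\OOO(G)\otimes\OOO(X)$, so that $C_{0}=\emptyset$.

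The main obstacle is the \emph{general} case, where $X$ is neither complete nor affine, and here neither handle is available. The functor $\Aut(X)$ is not represented by a finite-type group scheme (on open affine pieces it is only an ind-group), so the ``rational homomorphism is regular'' argument has no target to land in; completing $X$ to a proper $\bar X$ destroys the hypothesis, since the $\rho_{g}$ become merely birational, not biregular, on $\bar X$; and the Regularization Theorem (Theorem~\ref{thm1}) only supplies a $G$-equivariant birational model $Y$ with a regular action, with no reason for $X\dto Y$ to be biregular. At bottom the difficulty is that separate continuity (each $\rho_{g}$ regular) does not visibly imply joint continuity of $(g,x)\mapsto\rho_{g}(x)$ in the Zariski topology, and the translation symmetry above cannot close the gap by itself: the $\Theta_{a}$-orbit of a point of $C$ stays inside $C$, so one can never move a bad point into $D$. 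The crux I would focus on is therefore the purely geometric assertion that a nonempty closed invariant locus $C$ with all fibres isomorphic to $C_{0}$, arising as the indeterminacy locus of an action by \emph{automorphisms}, cannot exist --- perhaps by combining a purity (codimension) estimate for the indeterminacy locus on the normalization of $G\times X$ with the rigidity coming from $C_{g}=\rho_{g}^{-1}(C_{0})$.
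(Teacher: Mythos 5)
You have not proved the statement, but neither does the paper: this is precisely the point. The statement is labelled a \emph{conjecture}, and the author says explicitly ``I don't know how to prove this''; the only thing the paper establishes is Theorem~\ref{rational-regular.thm}, i.e.\ the case where $X$ is \emph{affine} (and a dense subgroup $\Gamma$ with $\rho(\Gamma)\subset\Aut(X)$ suffices). Your proposal is candid about ending in the same place, and your diagnosis of where the difficulty sits --- separate regularity of the $\rho_g$ does not visibly give joint regularity of $(g,x)\mapsto\rho_g(x)$ --- agrees with the paper's own assessment. So there is no complete argument to compare against, and your write-up should not be presented as a proof.

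On the partial content: the translation symmetry $\Theta_a(g,x)=(ga^{-1},\rho_a(x))$ and the resulting identity $C_g=\rho_g^{-1}(C_0)$ for the fibres of the indeterminacy locus are correct and a clean reduction of the conjecture to $C_0=\emptyset$; this is a genuinely different (and tidier) framing than anything in the paper. Your affine case is, once unwound, exactly Theorem~\ref{rational-regular.thm} with $\Gamma=G$: the hypothesis $\rho(G)\subset\Aut(X)$ together with Lemma~\ref{biregular.lem}(b) and Proposition~\ref{main.prop}(b) gives regularity of $\rho$ on $G\times\Xr$, and the ``polar locus'' step you gesture at is supplied by the paper's Lemma~\ref{lem1}. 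Your complete case, however, has a circularity you should flag: to produce even a \emph{rational} map $G\dashrightarrow\underline{\Aut}(X)$ from $\rho$ you need an open dense $V\subset G$ with $V\times X\subset\Dom(\rho)$, i.e.\ $C_g=\emptyset$ for $g\in V$; but by your own fibre identity $C_g\cong C_0$ for \emph{every} $g$, so the existence of such a $V$ is already equivalent to $C_0=\emptyset$, which is the conjecture. Properness of $X$ does not break this loop by itself, so the complete case as sketched is not established either.
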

We can prove this under additional assumptions.

\begin{thm}\label{rational-regular.thm}
Let $\rho\colon G\to\Bir(X)$ be a rational action where $X$ is affine. Assume that there is a dense subgroup $\Gamma \subset G$ such that $\rho(\Gamma) \subset\Aut(X)$. Then the $G$-action on $X$ is regular.
\end{thm}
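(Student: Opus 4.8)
The goal is to show that the rational map $\rho\colon G\times X\dto X$ is in fact a morphism; once this is known, the action axioms follow formally. Indeed, if $\rho$ is a morphism then $\rho(gh,x)$ and $\rho(g,\rho(h,x))$ are two morphisms $G\times G\times X\to X$ agreeing on a dense open set, hence everywhere, and similarly $\rho(e,x)=x$; consequently each $\rho_{g}$ is an automorphism and $\rho$ is a regular action. Fixing a closed embedding $X\subseteq\bk^{N}$ with coordinate functions $x_{1},\dots,x_{N}$, it thus suffices to prove that each $F_{j}:=\rho^{*}x_{j}\in\bk(G\times X)$ is a regular function on $G\times X$, i.e.\ that $\rho^{*}f\in\OOO(G)\otimes\OOO(X)$ for every $f\in\OOO(X)$.

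I would first reduce to $X$ normal by passing to the normalization $\nu\colon\tilde X\to X$. Every $\rho_{g}$ lifts to a birational self-map of $\tilde X$, and every automorphism $\rho_{\gamma}$ with $\gamma\in\Gamma$ lifts to an automorphism of $\tilde X$; one obtains a rational $G$-action on the affine normal variety $\tilde X$ satisfying the same hypotheses. Since $G$ is smooth, $G\times\tilde X$ is normal, and for a rational map from a normal variety to the affine space $\bk^{N}$ the indeterminacy locus $Z$ is of pure codimension one, being the union of the polar divisors of the functions $\rho^{*}x_{j}$. I would then prove $Z=\varnothing$ by excluding both kinds of codimension-one components of $Z$ relative to the projection $\pi\colon G\times\tilde X\to G$.

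Vertical components, i.e.\ those contained in some $\pi^{-1}(T)$ with $T\subsetneq G$ a divisor, are excluded directly by the definition of a rational action, which requires $\Dom(\rho)\cap(\{g\}\times\tilde X)$ to be dense for \emph{every} $g\in G$; hence no whole fibre $\{g\}\times\tilde X$ can lie in $Z$. Horizontal components are where the hypothesis on $\Gamma$ enters. If $D_{0}\subseteq Z$ were a prime component dominating $G$, then some $\rho^{*}x_{j}$ has a genuine pole along $D_{0}$, and by a generic-flatness argument there is a dense open $\Omega\subseteq G$ such that for $\gamma\in\Omega$ the restriction $\rho^{*}x_{j}\vert_{\{\gamma\}\times\tilde X}$ still has a pole along the nonempty divisor $(D_{0})_{\gamma}$. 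As $\Gamma$ is dense it meets $\Omega$, and for such a $\gamma$ the coordinate $\rho_{\gamma}^{*}x_{j}$ is not regular on $\tilde X$, contradicting $\rho_{\gamma}\in\Aut(\tilde X)$. Hence $Z=\varnothing$ and the lifted action on $\tilde X$ is regular.

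Finally I would descend from $\tilde X$ to $X$. Writing $\rho^{*}x_{j}=\sum_{i=1}^{k}p_{i}\otimes\tilde a_{i}$ in $\OOO(G)\otimes\OOO(\tilde X)$ with the $p_{i}$ linearly independent over $\bk$, evaluation at $\gamma\in\Gamma$ gives $\sum_{i}p_{i}(\gamma)\,\tilde a_{i}=\rho_{\gamma}^{*}x_{j}$, which lies in $\OOO(X)$ because $\rho_{\gamma}$ is an automorphism of $X$. Choosing $\gamma_{1},\dots,\gamma_{k}\in\Gamma$ (possible by density) so that the matrix $(p_{i}(\gamma_{j}))$ is invertible, one solves this linear system for the $\tilde a_{i}$ and concludes $\tilde a_{i}\in\OOO(X)$, whence $\rho^{*}x_{j}\in\OOO(G)\otimes\OOO(X)=\OOO(G\times X)$. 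This shows that $\rho$ is a morphism, as required. The main obstacle is the horizontal step: making precise that a pole along a divisor dominating $G$ persists after restriction to the generic fibre, so that the density of $\Gamma$ forces an actual $\gamma$ violating regularity. The reduction to the normal case and the mild bookkeeping needed when $X$ is reducible or $G$ is disconnected are comparatively routine.
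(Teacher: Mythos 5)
Your proposal is correct in outline, but it takes a genuinely different route from the paper's. The paper deduces Theorem~\ref{rational-regular.thm} from the Regularization Theorem itself: it shows that $\Xr$ is $\Gamma$-stable, embeds it $G$-equivariantly as an open subset of a variety $Y$ with a regular $G$-action (Theorem~\ref{thm3}), observes that the closed complement $Y\setminus\Xr$ is $\Gamma$-stable and hence $G$-stable by density, so that $\rho$ is already \emph{regular} on $G\times\Xr$, and then feeds the pair $U=\Xr$, $X'=\Gamma$ into the interpolation Lemma~\ref{lem1} --- whose proof contains exactly the trick you use in your descent step (choosing points of a dense set making the matrix $(h_i(x_j))$ invertible). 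You bypass the regularization machinery entirely and let normality play the role of the open set $\Xr$: after normalizing, the indeterminacy locus of $G\times\tilde X\dto \bk^N$ is purely divisorial, vertical divisors are excluded by the axiom that $\Dom(\rho)$ meets each fibre $\{g\}\times\tilde X$ densely, and horizontal divisors are excluded by persistence of poles on generic fibres together with the density of $\Gamma$. This makes your argument logically independent of Theorems~\ref{thm1} and~\ref{thm3}, which is a real structural simplification, at the price of the normalization detour and of the pole-persistence statement that you rightly flag as the delicate point. That statement is true and not hard to nail down: near the generic point of a horizontal polar component $D_0$ of $F$, the function $1/F$ is regular and its zero locus can be arranged, on a suitable open $V$ meeting $D_0$, to be exactly $D_0\cap V$; for $\gamma$ in a dense open subset of $G$ the restriction of $1/F$ to $\{\gamma\}\times\tilde X$ then vanishes at some point of $(D_0)_\gamma\cap V$ without vanishing identically nearby, which forces $F|_{\{\gamma\}\times\tilde X}$ to be non-regular there. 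With that filled in, and the routine observations that the $\nu^*x_j$ generate $\OOO(X)$ so your descent gives $\rho^*\OOO(X)\subseteq\OOO(G)\otimes\OOO(X)$, your proof goes through. The trade-off: the paper's route produces the stronger intermediate fact that the action is regular on the open set $\Xr$ for an arbitrary (not necessarily affine) $X$, with affineness entering only through Lemma~\ref{lem1}, whereas yours uses affineness throughout but needs nothing beyond the definitions and normalization.
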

The proof will be given in the last section~\ref{proof.subsec}.

\begin{defn}
Given rational $G$-actions $\rho$ on $X$ and $\mu$ on $Y$, a dominant rational map $\phi\colon X \to Y$ is called {\it $G$-equivariant\/} if the following holds:
\begin{itemize}
\item[(Equi)]
For every $(g,x)\in G\times X$ such that (1) $\rho$ is defined in $(g,x)$, (2) $\phi$ is defined in $x$ and in $\rho(g,x)$, and  (3) $\mu$ is defined in $(g,\phi(x))$, we have $\phi(\rho(g,x)) = \mu(g,\phi(x))$.
\end{itemize}
\end{defn}
Note that the set of $(g,x)\in G\times X$ satisfying the assumptions of (Equi) is open and dense in $G \times X$ and has the property that it meets all $\{g\}\times X$ in a dense open set.

\begin{rem}\label{open-subsets.rem}
If $G$ acts rationally on $X$ and if $X' \subset X$ is an nonempty open subset, then $G$ acts rationally on $X'$, and the inclusion $X' \into X$ is $G$-equivariant. Moreover, if $G$ acts rationally on $X$ and if $\phi\colon X \dto Y$ is a birational map, then there is uniquely define rational action of $G$ on $Y$ such that $\phi$ is $G$-equivariant.

Note that for a rational $G$-action $\rho$ on $X$ and an open dense set $X '\subset X$ with induced rational $G$-action $\rho'$ we have 
\begin{gather*}
\Dom(\rho')=\{(g,x)\in\Dom(\rho) \mid x \in X' \text{ and }g\cdot x \in X'\},\\
\Breg(\trho')=\{(g,x)\in\Breg(\trho) \mid x \in X' \text{ and }g\cdot x \in X'\}.
\end{gather*}
\end{rem}

\ps
%%%%%%%%%%%%%%%%%%%%%%%%%%%%%%%%%%%%%%%%
\subsection{The case of a finite group \texorpdfstring{$G$}{G}}
Assume that $G$ is finite and acts rationally on an irreducible variety $X$. Then every $g \in G$ defines a birational map $g\colon X \dto X$ and thus an automorphism $g^{*}$ of the field $\bk(X)$ of rational functions on $X$. In this way we obtain a homomorphism $G \to \Aut_{\bk}(\bk(X))$ given by $g \mapsto (g^{*})^{-1}$. 

By Remark~\ref{open-subsets.rem} above we may assume that $X$ is affine. Hence $\bk(X)$ is the field of fractions of the coordinate ring $\OOO(X)$. Since $G$ is finite we can find a finite-dimensional $\bk$-linear subspace $V \subset \bk(X)$ which is $G$-stable and contains a system of generators of $\OOO(X)$.

Denote by $R \subset \bk(X)$ the subalgebra generated by $V$. By construction, 
\be
\item $R$ is finitely generated and $G$-stable, and 
\item $R$ contains $\OOO(X)$. 
\ee
In particular, the field of fractions of $R$ is $\bk(X)$.
If we denote by $Y$ the affine variety with coordinate ring $R$, we obtain a regular action of $G$ on $Y$ and a birational morphism $\psi\colon Y \to X$ induced by the inclusion $\OOO(X) \subset R$. Now the Regularization Theorem follows in this case with $\phi:=\psi^{-1}\colon X \dto Y$.

\ps
There is a different way to construct a ``model'' with a regular $G$-action, without assuming that $X$ is irreducible. In fact, there is always an open dense set $\Xr \subset X$ where the action is regular. It is defined in the following way (cf. Definition~\ref{G-regular.def} below). For $g \in G$ denote by $X_{g}\subset X$ the open dense set where the rational map $\rho_{g}\colon x \mapsto g\cdot x$ is biregular. Then $\Xr:=\bigcap_{g\in G} X_{g}$ is open and dense in $X$ and the rational $G$-action on $\Xr$ is regular. In fact, $\rho_{g}$ is biregular on $\Xr$, hence also biregular on $h \cdot\Xr$ for all $h \in G$ which implies that $h\cdot \Xr \subset \Xr$.

\ps
%%%%%%%%%%%%%%%%%%%%%%%%%%%%%%%%%%%%%%
\subsection{A basic example}
We now give an example which should help to understand the constructions and the proofs below. Let $X$ be a variety with a regular action of an algebraic group $G$. Choose an open dense subset $U \subset X$ and consider the rational $G$-action on $U$. Then $\tilde X := \bigcup_{g\in G}g U\subseteq X$ is open and dense in $X$ and carries a regular action of $G$.

The rational $G$-action $\rho$ on $U$ is rather special. First of all we see that $\rho$ is defined in $(g,u)$ if and only if $g\cdot u \in U$. This implies that $\rho$ is defined in $(g,u)$ if and only of $\rho_{g}$ is defined in $u$. Next we see that if $\rho$ is defined in $(g,u)$, then $\tilde\rho\colon G \times U \dto G\times U$, $(g,x)\mapsto (g,\rho(g,x))$,  is biregular in $(g,u)$. And finally, for any $x$ the (open) set of elements $g \in G$ such that $\trho$ is biregular in $(g,x)$ is dense in $G$.

\ps
A first and major step  in the proof  is to show (see section~\ref{G-regular-points.subsec}) that for every rational $G$-action on a variety $X$ there is an open dense subset $\Xr\subset X$ with the property that for every $x\in \Xr$ the rational map $\tilde\rho\colon G\times \Xr \dto G \times \Xr$ is biregular in $(g,x)$ for all $g$ in a dense (open) set of $G$. Then, in a second step in section~\ref{model.subsec}, we construct from $\Xr$ a variety $Y$ with a regular $G$-action together with an open $G$-equivariant embedding $\Xr \into Y$.

\ps
%%%%%%%%%%%%%%%%%%%%%%%%%%%%%%%%%%%%%%%%
\subsection{\texorpdfstring{$G$}{G}-regular points and their properties}\label{G-regular-points.subsec}
Let $X$ be a variety with a rational action $\rho\colon G \times X \dto X$ of an algebraic group $G$.
Define 
$$
\tilde\rho\colon G \times X \dto G \times X, \quad  (g,x)\mapsto (g,\rho(g,x)).
$$
It is clear that $\Dom(\tilde\rho) = \Dom(\rho)$ and that $\tilde\rho$ is birational with inverse $\tilde\rho^{-1}(g,y) =(g,\rho(g^{-1},y))$, i.e. $\tilde\rho^{-1}= \tau\circ\tilde\rho\circ\tau$ where $\tau\colon G \times X \simto G\times X$ is the isomorphism $(g,x) \mapsto (g^{-1},x)$. 

The following definition is crucial. 
\begin{defn}\label{G-regular.def}
A point $x \in X$ is called {\it $G$-regular} for the rational $G$-action $\rho$ on $X$ if $\Breg(\trho)\cap (G\times\{x\})$ is dense in $G\times\{x\}$, i.e. $\trho$ is biregular in $(g,x)$ for all $g$ in a dense (open) set of  $G$. 
We denote by $\Xr \subset X$ the set of $G$-regular points.
\end{defn}

Let $\lambda_{g}\colon G \simto G$ be the left multiplication with $g \in G$. For every $h \in G$ we have the following commutative diagram
\begin{center}
\begin{tikzcd}
G \times X  \arrow[r, dashed, "\tilde\rho"]\arrow[d,"\lambda_h\times\id"',"\simeq"]
  & G \times X \arrow[d,dashed,"\lambda_h\times\rho_h"]
\\
G\times X\arrow[r,dashed,"\tilde\rho"]
  & G \times X
\end{tikzcd}
\end{center}
This implies the following result.

\begin{lem}\label{biregular.lem}
With the notation above we have the following:
\be
\item\label{a}
If $\rho$ is defined in $(g,x)$ and $\rho_h$ defined in $g\cdot x$, then $\rho$ is defined in $(h g,x)$.
\item\label{b}
If $\tilde\rho$ is biregular in $(g,x)$ and $\rho_h$ biregular in $g\cdot x$, then $\tilde\rho$ is biregular in $(h g,x)$.
\ee
\end{lem}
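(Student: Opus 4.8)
The plan is to read the commutative square as the single identity of rational maps
$$
\trho\circ(\lambda_h\times\id)\;=\;(\lambda_h\times\rho_h)\circ\trho\colon G\times X\dto G\times X
$$
(both sides send $(g,x)$ to $(hg,h\cdot(g\cdot x))$ wherever defined), and then to transport the pointwise hypotheses, which are given for the right-hand composite, over to the left-hand one, whose behavior at $(g,x)$ records the behavior of $\trho$ at $(hg,x)$. I would isolate three elementary facts about rational maps. First, if $\phi$ is defined (resp.\ biregular) in a point $a$ and $\psi$ is defined (resp.\ biregular) in $\phi(a)$, then $\psi\circ\phi$ is defined (resp.\ biregular) in $a$; one simply composes representing morphisms (resp.\ open immersions) on a small enough neighborhood of $a$. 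Second, since $\Dom(\cdot)$ and $\Breg(\cdot)$ depend only on the rational map and not on a chosen representative, two equal rational maps have the same domain and the same biregular locus. Third, if $\alpha$ is an isomorphism, then $\phi\circ\alpha$ is defined (resp.\ biregular) in $p$ if and only if $\phi$ is defined (resp.\ biregular) in $\alpha(p)$, because precomposing by the local isomorphism $\alpha$ does not affect whether a representative is a morphism (resp.\ an open immersion).

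For part~\eqref{a} I would travel along the top and right edges of the square. Since $\rho$ is defined in $(g,x)$ we have $(g,x)\in\Dom(\rho)=\Dom(\trho)$, so $\trho$ is defined in $(g,x)$ with value $(g,g\cdot x)$. As $\lambda_h$ is an isomorphism and $\rho_h$ is defined in $g\cdot x$ by hypothesis, the map $\lambda_h\times\rho_h$ is defined in $(g,g\cdot x)$, so by the first fact the composite $(\lambda_h\times\rho_h)\circ\trho$ is defined in $(g,x)$. By the displayed identity together with the second fact, the composite $\trho\circ(\lambda_h\times\id)$ is then also defined in $(g,x)$; and since $\lambda_h\times\id$ is an isomorphism carrying $(g,x)$ to $(hg,x)$, the third fact shows that $\trho$, hence $\rho$, is defined in $(hg,x)$.

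For part~\eqref{b} I would run verbatim the same argument with ``defined'' replaced by ``biregular''. The only additional observation is that $\lambda_h\times\rho_h$ is biregular in $(g,g\cdot x)$, which holds because $\lambda_h$ is an isomorphism and $\rho_h$ is biregular in $g\cdot x$. Combined with the hypothesis that $\trho$ is biregular in $(g,x)$, the first fact gives that $(\lambda_h\times\rho_h)\circ\trho$ is biregular in $(g,x)$; the second and third facts then transport this through the square to yield that $\trho$ is biregular in $(hg,x)$.

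The step I expect to need the most care is the passage through the square via the second fact: the diagram commutes only as rational maps, that is, on a common dense open set, so it is essential that being \emph{defined} or \emph{biregular} at a point is an intrinsic attribute of the rational map rather than an artifact of a chosen representative or of the order of composition. Once this is made precise, compatibility of $\Dom$ and $\Breg$ with precomposition by the isomorphisms $\lambda_h\times\id$ and $\lambda_h\times\rho_h$ makes both statements fall out mechanically.
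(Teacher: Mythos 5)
Your proof is correct and follows exactly the route the paper intends: the paper offers no written proof beyond displaying the commutative square $\trho\circ(\lambda_h\times\id)=(\lambda_h\times\rho_h)\circ\trho$ and asserting that it "implies the result," and your three elementary facts about $\Dom$ and $\Breg$ (stability under composition at a point, invariance under equality of rational maps, and compatibility with pre-composition by an isomorphism) are precisely the bookkeeping needed to extract both parts from that square.
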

The main proposition is the following.
\begin{prop}\label{main.prop}
\be
\item\label{aa}
$\Xr$ is open and dense in $X$.
\item\label{bb}
If $x \in \Xr$ and if  $\tilde\rho$ is biregular in $(g,x)$, then $g\cdot x \in \Xr$.
\ee
\end{prop}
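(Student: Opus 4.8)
The plan is to reformulate $G$-regularity through the projection $\pr\colon G\times X \to X$ and the open dense set $B:=\Breg(\trho)\subseteq G\times X$, which is open and dense by Lemma~\ref{Breg.lem} since $\trho$ is birational. By definition $x\in\Xr$ means that $B\cap(G\times\{x\})=B\cap\pr^{-1}(x)$ is dense in the fibre $G\times\{x\}\cong G$. Writing $G=\bigsqcup_i G_i$ for the decomposition into its (finitely many) connected components, this amounts to requiring $B\cap(G_i\times\{x\})\neq\emptyset$ for every $i$, because each $G_i$ is irreducible and a nonempty open subset of an irreducible variety is dense.

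For part~(\ref{aa}) I would argue as follows. Set $B_i:=B\cap(G_i\times X)$. Since $G_i\times X$ is open and closed in $G\times X$ and $B$ is dense, $B_i$ is open and dense in $G_i\times X$. The projection $\pr_i\colon G_i\times X\to X$ is surjective and open, a projection along a nonempty variety being an open map, so $\pr_i(B_i)\subseteq X$ is open; and it is dense, since for any nonempty open $V\subseteq X$ the preimage $G_i\times V$ is nonempty open and hence meets the dense set $B_i$. The reformulation above gives $\Xr=\bigcap_i \pr_i(B_i)$, a finite intersection of dense open subsets. As the intersection of two dense open sets is dense open in any topological space, $\Xr$ is open and dense. (For irreducible $G$ this collapses to $\Xr=\pr(B)$, and note that no irreducibility of $X$ is used anywhere.)

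For part~(\ref{bb}) I first record that $\trho$ biregular in $(g,x)$ forces $\rho_g$ to be biregular in $x$: restricting the local open immersion $\trho$ to the slice $\{g\}\times X$, which $\trho$ preserves, exhibits $\rho_g$ as an open immersion near $x$. Put $y:=g\cdot x$ and $U:=\Breg(\rho_g)\ni x$. The key is to intertwine $\trho$ with a base-point change from $x$ to $y$ by means of right translation. Consider the birational maps
\[
\Theta:=R_{g^{-1}}\times\rho_g\quad\text{and}\quad \mu:=R_{g^{-1}}\times\id,
\]
where $R_{g^{-1}}\colon G\simto G$ is right multiplication. A direct check using the homomorphism property $\rho_{ag^{-1}}\circ\rho_g=\rho_a$ yields the identity $\trho\circ\Theta=\mu\circ\trho$ of rational maps. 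Here $\mu$ is an isomorphism and $\Theta$ restricts to an isomorphism on $G\times U$ with $\Theta(hg,x)=(h,y)$. Comparing the biregularity loci on the two sides of this identity at the point $(hg,x)$ gives, for every $h\in G$,
\[
(h,y)\in B \iff (hg,x)\in B.
\]
Thus $\{h\mid (h,y)\in B\}$ is the image of the dense set $\{h'\mid (h',x)\in B\}$ under the automorphism $h'\mapsto h'g^{-1}$ of $G$, hence dense; so $y=g\cdot x\in\Xr$.

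I expect part~(\ref{bb}) to be the main obstacle: one must find the correct conjugation that moves the base point along the orbit. The left-translation square preceding Lemma~\ref{biregular.lem} only moves the first coordinate while fixing the base point, so it yields biregularity over a fixed $x$; transporting density from the fibre over $x$ to the fibre over $g\cdot x$ genuinely requires the right-translation intertwiner $\Theta$ above, together with the preliminary observation that $\trho$ biregular in $(g,x)$ makes $\rho_g$ biregular in $x$. The remaining ingredients—openness of the projection, density of images, and stability of dense open sets under finite intersection—are routine.
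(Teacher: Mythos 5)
Your proof is correct. Part~(\ref{aa}) is essentially the paper's argument: decompose $G$ into connected components $G_i$, note that $\Breg(\trho)\cap(G_i\times X)$ is open and dense, and write $\Xr$ as the finite intersection of the (open, dense) images under the projection to $X$; you merely spell out the openness of the projection and the density of the images, which the paper asserts without comment. Part~(\ref{bb}) takes a genuinely different route. The paper first flips the base point with the inversion $\tau$ (using $\trho^{-1}=\tau\circ\trho\circ\tau$ to get biregularity of $\trho$ at $(g^{-1},g\cdot x)$) and then invokes Lemma~\ref{biregular.lem}(\ref{b}), i.e.\ the left-translation square $(\lambda_h\times\rho_h)\circ\trho=\trho\circ(\lambda_h\times\id)$, to sweep out a dense set of $h$'s over the new base point $g\cdot x$ --- so your closing remark that the left-translation square cannot transport the base point slightly underestimates it: combined with $\tau$ it can, and that is exactly what the paper does. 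Your alternative replaces this two-step argument by the single right-translation intertwiner $\trho\circ(R_{g^{-1}}\times\rho_g)=(R_{g^{-1}}\times\id)\circ\trho$, which maps the fibre $G\times\{x\}$ directly onto the fibre $G\times\{y\}$ and yields the cleaner equivalence $(h,y)\in\Breg(\trho)\iff(hg,x)\in\Breg(\trho)$; this is self-contained (it does not use Lemma~\ref{biregular.lem}) and makes explicit the preliminary fact, also needed but left implicit in the paper, that biregularity of $\trho$ at $(g,x)$ forces biregularity of $\rho_g$ at $x$. The trade-off is that the paper's version reuses a lemma it needs elsewhere anyway, while yours is more direct and symmetric.
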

\begin{proof}
(a) Let $G = G_0\cup G_1\cup\cdots\cup G_n$ be the decomposition into connected components. Then $D_i:= \Breg(\rho)\cap (G_{i}\times X)$ is open and dense for all $i$ (Lemma~\ref{Breg.lem}), and the same holds for the image $\bar D_{i} \subset X$ under the projection onto $X$. Since $\Xr=\bigcap_{i}\bar D_{i}$, the claim follows.
\ps
(b) If $\tilde\rho$ is biregular in $(g,x)$, then $\trho^{-1} = \tau\circ\trho\circ\tau$ is biregular in $(g,g\cdot x)$, hence $\trho$ is biregular in $\tau(g,g\cdot x) = (g^{-1},g\cdot x)$.
If $x$ is $G$-regular, then $\rho_{h}$ is biregular in $x$ for all $h$ from a dense open set $G'\subset G$. Now Lemma~\ref{biregular.lem}(\ref{b}) implies that $\tilde\rho$ is biregular in $(h g^{-1},g\cdot x)$ for all $h\in G'$, hence $g\cdot x\in\Xr$.
\end{proof}
Note that for an open dense set $U \subset X$ a point $x \in U$ might be $G$-regular for the rational $G$-action on $X$, but not for the rational $G$-action on $U$. However, Proposition~\ref{main.prop}(b) implies the following result.
\begin{cor}\label{reduction-to-Xreg.cor}
For the rational $G$-action on $\Xr$ every point is $G$-regular.
\end{cor}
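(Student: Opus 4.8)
The plan is to read off everything from Proposition~\ref{main.prop}(\ref{bb}) together with the description of the biregular locus of the restricted action recorded in Remark~\ref{open-subsets.rem}. Write $\rho'$ for the rational $G$-action induced on the open dense subset $\Xr \subset X$, and $\trho'$ for the associated birational self-map of $G\times\Xr$. Fix a point $x \in \Xr$; by the definition of $G$-regularity I must show that $\{g\in G \mid \trho' \text{ is biregular in }(g,x)\}$ is dense in $G$.

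First I would invoke the formula from Remark~\ref{open-subsets.rem},
$$
\Breg(\trho')=\{(g,x)\in\Breg(\trho) \mid x \in \Xr \text{ and }g\cdot x \in \Xr\}.
$$
Since the chosen $x$ already lies in $\Xr$, the first membership condition is automatic, so for this fixed $x$ the set of admissible $g$ equals $\{g\in G \mid (g,x)\in\Breg(\trho)\text{ and }g\cdot x\in\Xr\}$.

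The key observation is that the second condition $g\cdot x \in \Xr$ is redundant: by Proposition~\ref{main.prop}(\ref{bb}), as soon as $x \in \Xr$ and $\trho$ is biregular in $(g,x)$, one automatically has $g\cdot x \in \Xr$. Hence the set just described coincides with $\{g\in G \mid (g,x)\in\Breg(\trho)\}$. But $x$ being $G$-regular for the original action $\rho$ is precisely what $x \in \Xr$ means, so this last set is dense in $G$. Therefore $x$ is $G$-regular for $\rho'$, which is what we wanted.

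I do not expect a genuine obstacle here, since the statement is a formal consequence of the two cited results. The only point requiring care is to match the definition of $G$-regularity for the \emph{restricted} action against the explicit description of $\Breg(\trho')$, and in particular to recognize that the clause $g\cdot x \in \Xr$ is \emph{forced} by Proposition~\ref{main.prop}(\ref{bb}) rather than being an extra constraint that could shrink the dense set of good group elements.
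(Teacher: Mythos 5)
Your argument is correct and is exactly the one the paper intends: the paper simply states that Proposition~\ref{main.prop}(\ref{bb}) implies the corollary, and you have made that implication explicit by combining it with the description of $\Breg(\trho')$ from Remark~\ref{open-subsets.rem} to see that the clause $g\cdot x\in\Xr$ imposes no further restriction. Nothing to add.
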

This allows to reduce to the case of a rational $G$-action such every point is $G$-regular.
\begin{lem}\label{def-bireg.lem}
Assume that  $X=\Xr$.
If $\rho_{g}$ is defined in $x$, then $\rho_{g}$ is biregular in $x$. 
\end{lem}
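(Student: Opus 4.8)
The plan is to use the group law to factor $\rho_{g}$ as a composition of two birational maps that are \emph{biregular} at matching points, harvesting the needed biregularity from the $G$-regularity of the two points $x$ and $y:=g\cdot x$, both of which lie in $X=\Xr$. For a $G$-regular point $p$ write $S_{p}:=\{m\in G\mid \trho\text{ is biregular in }(m,p)\}$; by Lemma~\ref{Breg.lem} and Definition~\ref{G-regular.def} this is an open dense subset of $G$.

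First I would establish the technical bridge between the two-variable map $\trho$ and the one-variable maps $\rho_{m}$: if $\trho$ is biregular in $(m,p)$, then $\rho_{m}$ is biregular in $p$. Since $\trho(m',x')=(m',\rho_{m'}(x'))$ preserves the $G$-coordinate, one takes an open neighborhood $W$ of $(m,p)$ on which $\trho$ restricts to an open immersion and restricts it to the slice $\{m\}\times X$: the image of $W\cap(\{m\}\times X)$ is $\{m\}\times\rho_{m}(W_{m})=\trho(W)\cap(\{m\}\times X)$, which is open in $\{m\}\times X$, and $\trho$ carries the slice isomorphically onto it. Identifying $\{m\}\times X\cong X$, this exhibits $\rho_{m}$ as an isomorphism of an open neighborhood of $p$ onto an open subset of $X$. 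I expect this slice argument to be the main obstacle, as it is the only genuinely geometric step; the remainder is formal manipulation with the group law.

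Next I would choose the group element. Since $x\in\Xr$ and $y=g\cdot x\in\Xr$, both $S_{y}$ and $S_{x}g^{-1}$ are open dense in $G$, so their intersection is nonempty; pick $k$ in it and set $h:=kg$. From $k\in S_{y}$ the bridge gives $\rho_{k}$ biregular in $y$, and passing to the inverse $\rho_{k}^{-1}=\rho_{k^{-1}}$ shows $\rho_{k^{-1}}$ is biregular in $q:=k\cdot y=\rho_{k}(y)$. From $h=kg\in S_{x}$ the bridge gives $\rho_{kg}$ biregular in $x$; moreover, since $\rho_{g}$ is defined in $x$ with value $y$ and $\rho_{k}$ is defined in $y$, the homomorphism property $\rho_{kg}=\rho_{k}\circ\rho_{g}$ forces $\rho_{kg}(x)=q$.

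Finally I would assemble the composition. With $a:=kg$ and $b:=k^{-1}$ the map $\rho_{a}$ is biregular in $x$ and sends $x\mapsto q$, while $\rho_{b}$ is biregular in $q$; composing the two open immersions on a sufficiently small neighborhood of $x$ shows $\rho_{b}\circ\rho_{a}$ is biregular in $x$. But as birational maps $\rho_{b}\circ\rho_{a}=\rho_{k^{-1}kg}=\rho_{g}$, and being biregular in $x$ is a property of the birational map itself; hence $\rho_{g}$ is biregular in $x$, as claimed. Note that the hypothesis ``$\rho_{g}$ defined in $x$'' enters exactly to make $y=g\cdot x$ a well-defined point, so that $S_{y}$ and the identity $\rho_{kg}(x)=q$ are meaningful.
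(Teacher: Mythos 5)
Your proposal is correct and follows essentially the same route as the paper: use the $G$-regularity of $x$ and of $y=g\cdot x$ to pick $h=kg$ with $\rho_{kg}$ biregular in $x$ and $\rho_{k}$ biregular in $y$, then factor $\rho_{g}=\rho_{k}^{-1}\circ\rho_{kg}$. The paper's proof is a three-line version of this; the extra detail you supply (the slice argument passing from biregularity of $\tilde\rho$ at $(m,p)$ to biregularity of $\rho_{m}$ at $p$) is left implicit there.
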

\begin{proof}
\ps
Assume that $\rho_{g}$ is defined in $x \in X$. There is an open dense subset $G' \subset G$ such $\rho_{h}$ is biregular in $g\cdot x$ and $\rho_{hg}$ is biregular in $x$ for all $h \in G'$. Since $\rho_{hg} = \rho_{h} \circ \rho_{g}$ we see that $\rho_{g}$ is biregular in $x$.
\end{proof}

For a rational map $\phi\colon X \dto Y$ the {\it graph} $\Gamma(\phi)$ is defined in the usual way:
$$
\Gamma(\phi):=\{(x,y) \in  X \times Y \mid \phi \text{ is defined in $x$ and }\phi(x)=y\}.
$$
In particular, $\pr_{X}(\Gamma(\phi)) =\Dom(\phi)$ and $\pr_{Y}(\Gamma(\phi)) = \phi(\Dom(\phi))$.
\ps
The next lemma will play a central r\^ole in the construction of the regularization.

\begin{lem}\label{closed-graph.lem}
Consider a rational $G$-action $\rho$ on a variety $X$ and assume that every point of $X$ is $G$-regular. Then, for every $g \in G$, the graph $\Gamma(\rho_{g})$ is closed in $X \times X$.
\end{lem}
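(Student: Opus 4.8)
The plan is to deduce closedness of $\Gamma(\rho_{g})$ from a weaker ``partial'' closedness together with a density argument powered by $G$-regularity. Set $U:=\Dom(\rho_{g})$ and $U':=\Dom(\rho_{g^{-1}})$. Because $X=\Xr$, Lemma~\ref{def-bireg.lem} shows that $\rho_{g}$ is biregular on all of $U$; hence $\rho_{g}\colon U \simto \rho_{g}(U)$ is an open immersion, and the swap $(x,y)\mapsto(y,x)$ carries $\Gamma(\rho_{g})$ bijectively onto $\Gamma(\rho_{g^{-1}})$ (if $y=\rho_{g}(x)$ with $x\in U$ then $x=\rho_{g^{-1}}(y)$ with $y\in U'$, and conversely). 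I would first record the consequence that $\Gamma(\rho_{g})$ is closed in the open set $W_{g}:=(U\times X)\cup(X\times U')$: it is closed in $U\times X$ as the graph of a morphism into the separated variety $X$, and it is the swap-image of $\Gamma(\rho_{g^{-1}})$, which for the same reason is closed in $X\times U'$; and a set closed in each of two open sets is closed in their union. The same statement holds for every group element $k$ in place of $g$.

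It then remains to prove that $\overline{\Gamma(\rho_{g})}\subseteq W_{g}$, for then $\overline{\Gamma(\rho_{g})}=\overline{\Gamma(\rho_{g})}\cap W_{g}=\Gamma(\rho_{g})$. So I would argue by contradiction: suppose $(x_{0},y_{0})\in\overline{\Gamma(\rho_{g})}$ with $x_{0}\notin U$ and $y_{0}\notin U'$. Here is where $G$-regularity enters on both sides. The set $\{h : \rho_{h}\text{ is biregular at }x_{0}\}$ is open and dense in $G$ (it is the $x_{0}$-slice of $\Breg(\trho)$), and so is $\{h : \rho_{hg^{-1}}\text{ is defined at }y_{0}\}$ (the translate by $g$ of the $y_{0}$-slice of $\Dom(\rho)$, which is dense since $y_{0}$ is $G$-regular). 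Two open dense subsets of $G$ meet in every connected component, so I can choose a single $h$ lying in both.

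For such an $h$ I would use the factorization $\rho_{g}=\rho_{gh^{-1}}\circ\rho_{h}$ to move the offending limit point. Since $\rho_{h}$ is biregular on a neighborhood $N$ of $x_{0}$, the map $N\times X\to X\times X$, $(x,y)\mapsto(\rho_{h}(x),y)$, is an open immersion, and on $N\cap U$ it carries $\Gamma(\rho_{g})$ into $\Gamma(\rho_{gh^{-1}})$ because $\rho_{g}(x)=\rho_{gh^{-1}}(\rho_{h}(x))$ there. As open immersions preserve closures inside their image, this sends $(x_{0},y_{0})$ to $(h x_{0},y_{0})\in\overline{\Gamma(\rho_{gh^{-1}})}$. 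But now $y_{0}\in\Dom(\rho_{hg^{-1}})=\Dom(\rho_{(gh^{-1})^{-1}})$ by the choice of $h$, so the partial closedness of the first step applied to $k=gh^{-1}$ forces $(h x_{0},y_{0})\in\Gamma(\rho_{gh^{-1}})$; in particular $\rho_{gh^{-1}}$ is defined at $h x_{0}=\rho_{h}(x_{0})$. Lemma~\ref{biregular.lem}(\ref{a}) then shows that $\rho_{g}=\rho_{(gh^{-1})h}$ is defined at $x_{0}$, i.e.\ $x_{0}\in U$ --- the desired contradiction.

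The main obstacle is the choice of $h$, and in particular the realization that one must correct on the $y$-side rather than the $x$-side. Indeed, the factorization shows that finding $h$ with $\rho_{h}$ biregular at $x_{0}$ and $\rho_{gh^{-1}}$ defined at $h x_{0}$ is equivalent to $\rho_{g}$ being defined at $x_{0}$, which is exactly what we lack; so the $x$-coordinate can never be repaired, and it is essential that $G$-regularity of $y_{0}$ provides, through the inverse domain $\Dom(\rho_{hg^{-1}})$, a dense supply of usable $h$. The remaining care is bookkeeping: checking that the relevant slices of $\Breg(\trho)$ and of $\Dom(\rho)$ are genuinely dense in every component of the possibly disconnected $G$, and that when one passes to the dense open locus where the composition $\rho_{gh^{-1}}\circ\rho_{h}$ literally agrees with $\rho_{g}$ one does not lose the limit point $(x_{0},y_{0})$ from the relevant closure.
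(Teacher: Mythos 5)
Your proof is correct, and it takes a genuinely different route from the paper's, while resting on the same two inputs: Lemma~\ref{def-bireg.lem} (defined $\Rightarrow$ biregular when $X=\Xr$) and the fact that $G$-regularity of \emph{both} coordinates of a limit point supplies a dense set of auxiliary elements $h$ that are simultaneously good at both. The paper chooses $h$ with $\rho_{hg}$ biregular at $x_{0}$ and $\rho_{h}$ biregular at $y_{0}$, maps $\overline{\Gamma(\rho_{g})}$ into the diagonal $\Delta(X)$ via the product map $\rho_{hg}\times\rho_{h}$, and reads off biregularity of the first projection at $(x_{0},y_{0})$ from closedness of $\Delta(X)$; you instead isolate the intermediate statement that $\Gamma(\rho_{k})$ is closed in $(\Dom(\rho_{k})\times X)\cup(X\times\Dom(\rho_{k^{-1}}))$ --- obtained from the swap symmetry, which itself needs Lemma~\ref{def-bireg.lem} --- and then correct only the first coordinate of a hypothetical bad limit point, translating it into the good region for the shifted graph $\Gamma(\rho_{gh^{-1}})$. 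Your decomposition cleanly separates the separatedness input (graphs of morphisms are closed) from the translation argument and makes explicit why the repair must come through the $y$-side domain $\Dom(\rho_{hg^{-1}})$; the paper's version is shorter because correcting both coordinates at once lands directly in the diagonal. Two small inaccuracies that do not affect the argument: the set $\{h : \rho_{h}\text{ biregular at }x_{0}\}$ \emph{contains} the $x_{0}$-slice of $\Breg(\trho)$ but need not equal it (cf.\ the Remark on $\Dom(\rho)$ versus $\Dom(\rho_{g})$), and containment in a dense open set is all you use; and the final citation of Lemma~\ref{biregular.lem}(\ref{a}) is not quite apt, since that lemma concerns the joint map $\rho$ on $G\times X$ rather than pointwise composition of the birational maps $\rho_{h}$ and $\rho_{gh^{-1}}$ --- the fact you actually need is the composition argument used in the proof of Lemma~\ref{def-bireg.lem}.
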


\begin{proof} Let $\Gamma:=\overline{\Gamma(\rho_{g})}$  be the closure of the graph of $\rho_{g}$ in $X \times X$. We have to show that for every $(x_{0},y_{0})\in \Gamma$ the rational map $\rho_{g}$ is defined in $x_{0}$, or, equivalently, that the morphism $\pi_{1}:=\pr_{1}|_{\Gamma}\colon\Gamma \to X$ induced by the first projection is biregular in $(x_{0},y_{0})$. 

Choose $h \in G$ such that $\rho_{h g}$ is biregular in $x_{0}$ and $\rho_{h}$ is biregular in $y_{0}$, and consider the induced birational map $\Phi:=(\rho_{h g}\times\rho_{h})\colon X \times X \dto X \times X$. 
If $\Phi$ is defined in $(x,y) \in \Gamma(\rho_{g})$, $y:=g\cdot x$, then $\Phi(x,y) = ((h g)\cdot x, (h g)\cdot x) \in \Delta(X)$ where $\Delta(X):=\{(x,x) \in X \times X \mid x \in X\}$ is the diagonal. It follows that $\overline{\Phi(\Gamma)}\subseteq \Delta(X)$. 
\begin{center}
\begin{tikzcd}
X \times X  \arrow[r, dashed, "\rho_{h g}\times\rho_{h}"]
  & X \times X
\\
\Gamma\arrow[r,dashed,"\phi"] \arrow[u,hook, "\subset"'] \arrow[d,"\pi_1"']
  & \Delta(X)\arrow[u,hook,"\subset"'] \arrow[d, "\pr_1"', "\simeq"]
\\
 X  \arrow[r,dashed]{}{\rho_{h g}}
  & X
\end{tikzcd}
\end{center}
Since $\Phi$ is biregular in $(x_{0},y_{0})$, we see that  $\phi:=\Phi|_{\Gamma}\colon \Gamma \dto \Delta(X)$ is also biregular in $(x_{0},y_{0})$. 
By construction, we have $\rho_{h g}\circ\pi_{1}=\pr_{1}\circ\phi$. Since $\rho_{h g}$ is biregular in $\pi_{1}(x_{0},y_{0})$ and $\phi$ is biregular in $(x_{0},y_{0})$ (and $\pr_{1}|_{\Delta(X)}$ is an isomorphism)  it follows that $\pi_{1}$ is biregular in $(x_{0},y_{0})$, hence the claim.
\end{proof}

The last lemma is easy.

\begin{lem}\label{last.lem}
Consider a rational action $\rho$ of $G$ on a variety $X$. Assume that there is a dense open set $U \subset X$ such that $\tilde\rho$ defines an open immersion $\tilde\rho\colon G \times U \into G \times X$. Then the open dense subset $Y:=\bigcup_{g}g\cdot U \subset X$ carries a regular $G$-action.
\end{lem}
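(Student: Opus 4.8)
The plan is to show that, on the open cover $\{g\cdot U\}_{g\in G}$ of $Y$, the rational action $\rho$ is already everywhere defined and given by a morphism, and that these local morphisms glue to a regular action $G\times Y\to Y$.

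First I would record the consequences of the open immersion hypothesis. Writing $W:=\trho(G\times U)\subseteq G\times X$ for the (open) image, $\trho\colon G\times U\simto W$ is an isomorphism of varieties, with inverse $(g,y)\mapsto(g,g^{-1}\cdot y)$. Intersecting with the slice $\{g\}\times X$ shows that $g\cdot U = \pr_{2}(W\cap(\{g\}\times X))$ is open in $X$; hence $Y=\bigcup_{g} g\cdot U$ is open, and it is dense since it contains $U$. Moreover, for each fixed $h\in G$ the maps $u\mapsto h\cdot u$ and $y\mapsto h^{-1}\cdot y$ are mutually inverse morphisms $\rho_{h}\colon U\simto h\cdot U$, because they are obtained from the morphisms $\pr_{2}\circ\trho$ and $\pr_{2}\circ\trho^{-1}$ by restricting to the slices over $h$ (here one uses $\{h\}\times(h\cdot U)=W\cap(\{h\}\times X)\subseteq W$).

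Next, for each $h\in G$ I would define a morphism $c_{h}\colon G\times(h\cdot U)\to X$ as the composition
\[
G\times(h\cdot U)\xrightarrow{\ \id\times\rho_{h^{-1}}\ }G\times U\xrightarrow{\ (g,u)\mapsto(gh,u)\ }G\times U\xrightarrow{\ \pr_{2}\circ\trho\ }X,
\]
so that $c_{h}(g,y)=(gh)\cdot(h^{-1}\cdot y)$. Each arrow is a morphism (the last one because $\trho$ is a morphism on $G\times U$), so $c_{h}$ is a morphism. Using the group-homomorphism property $\rho_{gh}\circ\rho_{h^{-1}}=\rho_{g}$ in $\Bir(X)$, one checks that $c_{h}$ agrees with the rational map $\rho$ on a dense open subset of $G\times(h\cdot U)$; since varieties are separated, $c_{h}$ is therefore the canonical morphism representing $\rho|_{G\times(h\cdot U)}$, which shows in particular that $\rho$ is defined on all of $G\times(h\cdot U)$. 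Finally $c_{h}(g,y)=(gh)\cdot(h^{-1}\cdot y)\in(gh)\cdot U\subseteq Y$, so the image lies in $Y$.

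It remains to glue and to verify the group axioms. On an overlap $G\times((h_{1}\cdot U)\cap(h_{2}\cdot U))$ the two morphisms $c_{h_{1}}$ and $c_{h_{2}}$ both represent the single rational map $\rho$, hence agree on a dense open set and therefore everywhere, by separatedness. Thus the $c_{h}$ glue to a morphism $a\colon G\times Y\to Y$, $a(g,y)=g\cdot y$. The identities $e\cdot y=y$ and $g\cdot(g'\cdot y)=(gg')\cdot y$ hold on dense open subsets of $Y$ and of $G\times G\times Y$ respectively, because $\rho$ is a rational action; as both sides are now morphisms into the separated variety $Y$, they hold identically, so $a$ is a regular action of $G$ on $Y$. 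The point requiring the most care --- and the place where the open-immersion hypothesis is essential --- is the second step: producing the inverse morphisms $\rho_{h^{-1}}\colon h\cdot U\to U$ and identifying the composite $c_{h}$ with $\rho$, after which everything follows from the standard fact that two morphisms to a separated variety which agree on a dense subset coincide.
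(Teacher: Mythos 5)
Your proof is correct and takes essentially the same approach as the paper's, which is only a two-line sketch: restrict $\trho$ to slices to obtain isomorphisms $\rho_{h}\colon U\simto h\cdot U$, deduce that $Y$ is $G$-stable, and transport the morphism on $G\times U$ to $G\times(h\cdot U)$ via $(g,y)\mapsto (gh)\cdot(h^{-1}\cdot y)$ before gluing. You have simply filled in the details (the maps $c_{h}$, the separatedness arguments for gluing and for the group axioms) that the paper leaves to the reader.
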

\begin{proof}
It is clear that every $\rho_{g}$ induces an isomorphism $U \simto g\cdot U$. This implies that $Y$ is stable under all $\rho_{g}$. It remains to see that the induced map $G \times Y \to Y$ is a morphism. By assumption, this is clear on $G \times U$, hence also on $G \times g\cdot U$ for all $g \in G$, and we are done.
\end{proof}

\ps
%%%%%%%%%%%%%%%%%%%%%%%%%%%%%%%%%
\subsection{The construction of a regular model}\label{model.subsec}
In view of Corollary~\ref{reduction-to-Xreg.cor} our Theorem~\ref{thm1} will follow from the next result.
\begin{thm}\label{thm3}
Let $X$ be a variety with a rational action of $G$. Assume that every point of $X$ is $G$-regular. Then there is a variety $Y$ with a regular $G$-action and a $G$-equivariant open immersion $X \into Y$.
\end{thm}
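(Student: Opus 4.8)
The plan is to realise $Y$ as the ``union of all translates of $X$'': conceptually $Y=\bigcup_{g\in G} g\cdot X$, where the translate $g\cdot X$ is a copy of $X$ glued to $h\cdot X$ along the birational map $\rho_{h^{-1}g}$. Packaging the index $g$ into the group variety, this is the same as forming the quotient of $G\times X$ by the equivalence relation $(g,x)\sim(h,x')\iff g\cdot x=h\cdot x'$, i.e. $x'=\rho_{h^{-1}g}(x)$, with quotient map $q\colon G\times X\to Y$, $q(g,x)=g\cdot x$. The variety $X$ will sit in $Y$ as the chart $g=e$, and left multiplication $\lambda_h\colon(g,x)\mapsto(hg,x)$ on $G\times X$, which commutes with $\sim$, will descend to the desired regular $G$-action on $Y$.

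First I would assemble the gluing data. For finitely many $g_1=e,g_2,\dots,g_n\in G$ let $X_i$ be a copy of $X$ and set $D_{ij}:=\Dom(\rho_{g_j^{-1}g_i})\subseteq X_i$; the transition $\phi_{ij}:=\rho_{g_j^{-1}g_i}\colon D_{ij}\to X_j$ is an \emph{open immersion}, because $X=\Xr$ and so, by Lemma~\ref{def-bireg.lem}, every $\rho_g$ that is defined at a point is automatically biregular there. The cocycle relation $\phi_{jk}\circ\phi_{ij}=\phi_{ik}$ holds on overlaps since $g\mapsto\rho_g$ is a homomorphism. To keep the atlas finite I would argue that the set-theoretic quotient $Y$ is quasi-compact, being a continuous image of the finite-type variety $G\times X$ under $q$, so that finitely many translates $g_1\cdot X=X,\ g_2\cdot X,\dots,g_n\cdot X$ already cover it; then $Y$ is obtained by gluing these finitely many copies of $X$ along the $\phi_{ij}$, giving a prevariety.

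The main obstacle — and precisely the point for which Lemma~\ref{closed-graph.lem} was established — is to show that this prevariety $Y$ is \emph{separated}, hence a genuine variety. By the standard criterion, $Y$ is separated if and only if for all $i,j$ the image of $D_{ij}$ under $x\mapsto(x,\phi_{ij}(x))$ is closed in $X_i\times X_j=X\times X$. But this image is exactly the graph $\Gamma(\rho_{g_j^{-1}g_i})$, which is closed by Lemma~\ref{closed-graph.lem}. Thus separatedness is handed to us by the closed-graph lemma, and I expect the technical heart of the argument to be checking that we are genuinely in the situation of this criterion and that the finite atlas suffices.

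It remains to install the regular $G$-action on $Y$ and to recognise the embedding. The action is the descent of $\lambda_h$ along $q$, which is well defined because $\lambda_h$ commutes with $\sim$. To see it is regular I would first check that $q\colon G\times X\to Y$ is a morphism, which is a local computation: near $(g_0,x_0)$ choose a chart index $i$ with $g_i^{-1}g_0$ in the dense open locus where $\rho$ is defined at $x_0$, so that in the chart $X_i$ one has $q(g,x)=[\rho(g_i^{-1}g,x)]$, given by the morphism $\rho$ on a neighbourhood. Since $q$ is then a surjective morphism whose fibres are the $\sim$-classes, the relation $a_Y\circ(\id_G\times q)=q\circ(\mathrm{mult}\times\id)$ together with faithfully-flat descent along $q$ shows that the action map $a_Y\colon G\times Y\to Y$ is a morphism; this is the same reduction to the ``model'' situation carried out in Lemma~\ref{last.lem}. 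Finally $X=X_1\into Y$ is an open immersion by construction and is $G$-equivariant because the action was obtained by descending $\lambda_h$, which completes the proof.
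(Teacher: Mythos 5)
Your overall architecture coincides with the paper's: glue finitely many copies of $X$ along the maps $\rho_{g_j^{-1}g_i}$ (which are open immersions on their domains by Lemma~\ref{def-bireg.lem}), prove separatedness from Lemma~\ref{closed-graph.lem} via closedness of the graphs $\Gamma(\rho_{g_j^{-1}g_i})$, and descend left multiplication to obtain the regular action. The gluing and separatedness steps are correct and are exactly the content of Lemma~\ref{construction.lem}.

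The gap is in how you select the finite set $g_1,\dots,g_n$, and it propagates into your last step. You extract the finite atlas from quasi-compactness of the set-theoretic quotient $Y$, covered by the translates $g\cdot X$. Two problems arise. First, to extract a finite subcover you need the sets $q(\{g\}\times X)$ to be open in the quotient topology; the preimage of $q(\{g\}\times X)$ in $G\times X$ is $\{(h,x)\mid \rho_{g^{-1}h}\text{ is defined in }x\}$, and this set is in general \emph{not} open --- this is precisely the phenomenon of the blow-up example in the paper's Remark, where $\rho_e=\id$ is defined everywhere while $\Dom(\rho)\cap(\{e\}\times X)$ is a proper open subset. Second, and more seriously, even a finite covering $Y=\bigcup_i g_i\cdot X$ is too weak for your final step: there you need, for every $(g_0,x_0)$, an index $i$ such that the rational map $\rho\colon G\times X\dto X$ is defined in $(g_i^{-1}g_0,x_0)$, so that $(g,x)\mapsto[\rho(g_i^{-1}g,x)]$ is a morphism on a neighbourhood of $(g_0,x_0)$; knowing only that the single birational map $\rho_{g_i^{-1}g_0}$ is defined in $x_0$ does not give this, again by the same Remark. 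The correct choice --- step (a) of the paper's proof --- uses quasi-compactness of $G\times X$ itself rather than of $Y$: since every point of $X$ is $G$-regular, the open sets $g\cdot\Breg(\trho)$, $g\in G$ (acting by left multiplication on the first factor), cover $G\times X$, so finitely many $g_i\cdot\Breg(\trho)$ already do. With this choice both the covering of $Y$ by the charts and the biregularity of $(g,\bar x)\mapsto(g,g\cdot\bar x)$ on $G\times\bXnull$ follow, and Lemma~\ref{last.lem} finishes the proof; the appeal to faithfully flat descent then becomes unnecessary.
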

From now on $X$ is a variety with a rational $G$-action $\rho$ such that $\Xr = X$.
Let $S:=\{g_{0}:=e, g_{1},g_{2},\ldots,g_{m}\} \subset G$ be a finite subset. These $g_{i}$'s will be carefully chosen in 
the proof of Theorem~2 below. Let $\Xnull,\Xone,\ldots,\Xm$ be copies of the variety $X$.  On the disjoint union $X(S) := \Xnull\cup\Xone\cup\cdots\cup\Xm$ we define the following relations between elements $x_{i},x_{i}'\in \Xi$:
\begin{gather}
\text{ For any $i$: \ } x_{i}\sim x_{i}' \ \iff \ x_{i}=x_{i}';\\
\text{For $i\neq j$: \ }x_{i} \sim x_{j} \ \iff \ \text{$\rho_{g_{j}^{-1}g_{i}}$  is defined in $x_{i}$  and sends $x_{i}$ to $x_{j}$.}
\end{gather}
It is not difficult to see that this defines an equivalence relation. (For the symmetry one has to use Lemma~\ref{def-bireg.lem}.)
Denote by $\bX(S) := X(S)/\sim$ the set of equivalence classes endowed with the induced topology.

\begin{lem}\label{construction.lem}
The maps $\iota_{i}\colon \Xi \to \bX(S)$ are open immersions and endow $\bX(S)$ with the structure of a variety.
\end{lem}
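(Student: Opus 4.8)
The plan is to verify directly that $\bX(S)$ satisfies the defining conditions of a variety built by gluing the finitely many affine (or general) pieces $\Xi$ along open subsets, and that each $\iota_i$ is an open immersion. The standard gluing criterion requires three things: (1) for each pair $i,j$ the set $U_{ij}\subset \Xi$ consisting of points $x_i$ that are $\sim$-equivalent to some point of $\Xj$ is \emph{open} in $\Xi$, and the induced identification $U_{ij}\to U_{ji}$ is an isomorphism; (2) these identifications satisfy the cocycle condition on triple overlaps; and (3) the resulting topological space is separated, so that $\bX(S)$ is genuinely a variety and not merely a prevariety. The maps $\iota_i$ are then automatically open immersions by construction of the glued space.

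First I would identify $U_{ij}$ explicitly. By relation (2), $x_i\sim x_j$ with $x_j\in\Xj$ exactly when $\rho_{g_j^{-1}g_i}$ is defined in $x_i$ and $g_j^{-1}g_i\cdot x_i = x_j$. Hence $U_{ij}=\Dom(\rho_{g_j^{-1}g_i})\subset\Xi$, which is open and dense. Since $X=\Xr$, Lemma~\ref{def-bireg.lem} tells us that $\rho_{g_j^{-1}g_i}$ is in fact \emph{biregular} wherever it is defined, so $\rho_{g_j^{-1}g_i}\colon U_{ij}\simto U_{ji}$ is an isomorphism onto an open subset of $\Xj$; this is precisely the transition map. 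The cocycle condition on $U_{ij}\cap U_{ik}$ reduces to the identity $\rho_{g_k^{-1}g_j}\circ\rho_{g_j^{-1}g_i}=\rho_{g_k^{-1}g_i}$, which holds as an equality of \emph{birational} maps because $g\mapsto\rho_g$ is a homomorphism; that the domains match up is again controlled by Lemma~\ref{def-bireg.lem} together with Lemma~\ref{biregular.lem}, exactly as in the verification of transitivity for the equivalence relation itself.

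The step I expect to be the main obstacle is separatedness, i.e. showing that the image of the diagonal is closed, equivalently that for each pair $i\neq j$ the graph of the transition map $\rho_{g_j^{-1}g_i}\colon U_{ij}\to\Xj$ is closed in $\Xi\times\Xj$. This is exactly where Lemma~\ref{closed-graph.lem} is designed to be applied: since every point of $X$ is $G$-regular, the graph $\Gamma(\rho_{g})$ is closed in $X\times X$ for every $g\in G$, and taking $g=g_j^{-1}g_i$ gives the required closedness of the transition graph. I would therefore isolate separatedness as the one point needing real input, and dispatch it by invoking Lemma~\ref{closed-graph.lem}; the remaining verifications (openness of $U_{ij}$, isomorphy of transitions, the cocycle identity) are the routine gluing bookkeeping that I would state but not belabor. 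Once separatedness is in hand, $\bX(S)$ is a variety covered by the open sets $\iota_i(\Xi)$, and each $\iota_i$ is an open immersion by the definition of the gluing, completing the proof.
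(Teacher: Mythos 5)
Your proposal is correct and follows essentially the same route as the paper: the overlaps are identified with the domains of definition of the $\rho_{g_j^{-1}g_i}$ (open by definition, with biregularity supplied by Lemma~\ref{def-bireg.lem}), and separatedness is reduced to the closedness of the graphs $\Gamma(\rho_{g_j^{-1}g_i})$, which is exactly the content of Lemma~\ref{closed-graph.lem}. The only difference is presentational — you phrase the construction as gluing data with transition maps and a cocycle condition, while the paper works with the quotient topology on the disjoint union modulo the equivalence relation — but the mathematical content is identical.
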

\begin{proof}
By definition of the equivalence relation and the quotient topology the natural maps $\iota_{i}\colon \Xi \to \bX(S)$ are injective and continuous. Denote the image by $\bXi$. We have to show that $\bXi$ is open in $\bX(S)$, or, equivalently, that the inverse image of $\bXi$ in $X(S)$ is open. This is clear, because the inverse image in $\Xi$ of the intersection $\bXi\cap\bXj$ is the open set of points where 
$\rho_{g_{j}^{-1}g_{i}}$ is defined. 

It follows  that $\bX(S)$ carries a unique structure of a prevariety such that the maps $\iota_{i}\colon \Xi \into \bX(S)$ are open immersions. 
It remains to see that the diagonal $\Delta(\bX(S)) \subset \bX(S) \times \bX(S)$ is closed. For this it suffices to show that $\Delta_{i j}:=\Delta(\bX(S))\cap (\bXi\times\bXj)$ is closed in $\bXi\times\bXj$ for all $i,j$. This follows from Lemma~\ref{closed-graph.lem}, because $\Delta_{i j}$ is the image of $\Gamma(\rho_{g_{j}^{-1}g_{i}}) \subset \Xi\times\Xj$. In fact, for $x_{i}\in \Xi$ and $x_{j}\in \Xj$ we have $(\bar x_{i},\bar x_{j})\in\Delta_{i j}$ if and only if $x_{i}\sim x_{j}$. This means that $\rho_{g_{j}^{-1}g_{i}}$ is defined in $x_{i}$ and $\rho_{g_{j}^{-1}g_{i}}(x_{i})=x_{j}$, i.e. $(x_{i},x_{j}) \in \Gamma(\rho_{g_{j}^{-1}g_{i}})$. 
\end{proof}

Fixing the open immersion $\iota_{0}\colon X = \Xnull \into \bX(S)$ we obtain a rational $G$-action  $\brho=\brho_{S}$ on $\bX(S)$ such that  $\iota_{0}$ is $G$-equivariant (Remark~\ref{open-subsets.rem}). 
If we consider each $\Xi$ as the variety $X$ with the rational $G$-action $\rhoi(g,x) := \rho(g_{i}g g_{i}^{-1},x)$, then, by construction of $\bX(S)$, the open immersions $\iota_{i}\colon \Xi \into \bX(S)$ are all $G$-equivariant. 

\begin{lem}\label{gi-is-defined-on-Xnull.lem}
For all $i$, the rational map $\brho_{g_{i}}$ is defined on $\bXnull$ and defines an isomorphism $\brho_{g_{i}}\colon\bXnull \simto \bXi$.
\end{lem}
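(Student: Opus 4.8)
The plan is to identify the birational map $\brho_{g_{i}}$, restricted to $\bXnull$, with the honest isomorphism $\iota_{i}\circ\iota_{0}^{-1}\colon \bXnull \simto \bXi$, and then to exploit the fact that the domain of definition of a rational map depends only on its equivalence class. Recall from Remark~\ref{open-subsets.rem} that $\brho$ is the rational action obtained by transport of structure through $\iota_{0}$, so that $G$-equivariance of $\iota_{0}$ gives $\brho_{g_{i}}\circ\iota_{0}=\iota_{0}\circ\rho_{g_{i}}$ as rational maps. Since $\iota_{0}\colon X=\Xnull\simto\bXnull$ is an open immersion, proving the lemma is equivalent to showing that the rational map $\brho_{g_{i}}\circ\iota_{0}\colon X \dto \bX(S)$ is defined on all of $X$ and there agrees with the open immersion $\iota_{i}\colon \Xi=X\into\bX(S)$.

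The heart of the matter is the computation $\brho_{g_{i}}\circ\iota_{0}=\iota_{i}$ as rational maps. First I would choose a dense open set $W\subset X$, dense in every irreducible component, on which $\rho_{g_{i}}$ is defined (hence biregular, by Lemma~\ref{def-bireg.lem}), on which $\rho_{g_{i}^{-1}}$ is defined at $\rho_{g_{i}}(x)$ with $\rho_{g_{i}^{-1}}(\rho_{g_{i}}(x))=x$, and on which the equivariance relation (Equi) for $\iota_{0}$ holds. For $x\in W$ the equivariance of $\iota_{0}$ gives $\brho_{g_{i}}(\iota_{0}(x))=\iota_{0}(\rho_{g_{i}}(x))$. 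Setting $y:=\rho_{g_{i}}(x)$, the defining gluing relation between $\Xnull$ and $\Xi$ (with $g_{0}=e$) says precisely that $\iota_{0}(y)=\iota_{i}(\rho_{g_{i}^{-1}}(y))$, and $\rho_{g_{i}^{-1}}(y)=x$; hence $\brho_{g_{i}}(\iota_{0}(x))=\iota_{i}(x)$ for all $x\in W$.

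Since $W$ is dense in each component of the reduced variety $X$ and $\bX(S)$ is separated (Lemma~\ref{construction.lem}), the two rational maps $\brho_{g_{i}}\circ\iota_{0}$ and $\iota_{i}$ coincide as rational maps $X\dto\bX(S)$. Now $\iota_{i}$ is a morphism, so $\Dom(\iota_{i})=X$; as the domain of definition is an invariant of the equivalence class of a rational map, this forces $\Dom(\brho_{g_{i}}\circ\iota_{0})=X$. Transporting back through the open immersion $\iota_{0}$ shows that $\brho_{g_{i}}$ is defined on all of $\bXnull$. Finally, on $\bXnull$ the equality $\brho_{g_{i}}=\iota_{i}\circ\iota_{0}^{-1}$ now holds as a morphism (two morphisms to the separated $\bX(S)$ agreeing on the dense set $\iota_{0}(W)$ are equal), and the right-hand side is the composite of the isomorphisms $\iota_{0}^{-1}\colon\bXnull\simto X$ and $\iota_{i}\colon X\simto\bXi$; hence $\brho_{g_{i}}\colon\bXnull\simto\bXi$ is an isomorphism, as claimed.

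I expect the main obstacle to be conceptual rather than computational: the passage from ``the rational map $\brho_{g_{i}}$ agrees, off a proper closed set, with the everywhere-defined morphism $\iota_{i}\circ\iota_{0}^{-1}$'' to ``$\brho_{g_{i}}$ is genuinely defined on all of $\bXnull$''. This is exactly the point of the gluing construction, and it rests on the invariance of $\Dom$ under equivalence of rational maps together with the separatedness of $\bX(S)$; when $X$ is reducible, some additional care is needed to ensure that the comparison set $W$ is dense in each irreducible component so that agreement on $W$ really yields equality of rational maps.
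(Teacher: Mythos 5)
Your proof is correct and follows essentially the same route as the paper: both identify $\brho_{g_{i}}|_{\bXnull}$ with the open immersion $\iota_{i}\circ\iota_{0}^{-1}$ by checking, via the gluing relation and the $G$-equivariance of $\iota_{0}$, that the two agree on a dense open subset, and then conclude that $\brho_{g_{i}}$ is everywhere defined on $\bXnull$ with image $\bXi$. The only cosmetic difference is that you read the gluing relation in the direction $X^{(0)}\to X^{(i)}$ (requiring $\rho_{g_{i}^{-1}}$ to be defined at $y=g_{i}\cdot x$), whereas the paper reads it in the direction $X^{(i)}\to X^{(0)}$ (requiring only that $\rho_{g_{i}}$ be defined at $x$).
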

\begin{proof} Consider the open immersion $\tau_{i}:=\iota_{i}\circ\iota_{0}^{-1}\colon\bXnull \into \bX(S)$ with image $\bXi$. We claim that $\tau_{i}(\bar x) = g_{i}\cdot \bar x$. It suffices to show that this holds on an open dense set of $\bXnull$. Let $U \subset X$ be the open dense set where $g_{i}\cdot x$ is defined. For $x \in U$ and $y:=g_{i}\cdot x \in X$ we get, by definition, $\iota_{0}(y) = \iota_{i}(x)$. On the other hand, $\iota_{0}(y) = \iota_{0}(g_{i}\cdot x) = g_{i}\cdot \iota_{0}(x)$. Hence, $g_{i}\cdot\iota_{0}(x) = \iota_{i}(x)$, and so $\tau_{i}(\bar x) = g_{i}\cdot \bar x$ for all $\bar x \in \iota_{0}(U)$.
\end{proof}
\newcommand{\Dnull}{D^{(0)}}
\begin{proof}[Proof of Theorem~\ref{thm3}]
(a)
Since $\Xr = X$, we see that for any $x \in X$ there is a $g\in G$ such that $(g,x)\in D$, hence $\bigcup_{g}g D = G\times X$ where $G$ acts on $G \times X$ by left-multiplication on $G$. As a consequence, we have $\bigcup_{i}g_{i}D = G \times X$ for a suitable finite subset $S = \{g_{0}=e,g_{1},\ldots,g_{m}\} \subset G$. This set $S$ will be used to construct $\bX(S)$.
\ps
(b)
Let $\Dnull \subset G \times \bXnull$ be the image of $D$, and consider the rational map $\tilde\rho_{S}\colon G\times \bXnull \dto G \times \bX(S)$, $(g,\bar x)\mapsto (g,\brho(g,\bar x))$. We claim that $\tilde\rho_{S}$ is biregular. In fact, for any $i$, the map $(g,x) \mapsto (g, g\cdot x)$ is the composition of $(g,x)\mapsto (g,(g_{i}^{-1}g)x)$ and $(g,y)\mapsto (g,g_{i}y)$ where the first one is biregular on $g_{i}\Dnull$ with image in $G \times \bXnull$, and the second is biregular on $G \times \bXnull$, by Lemma~\ref{gi-is-defined-on-Xnull.lem}. Now the claim follows, because $G\times \bXnull = \bigcup_{i} g_{i}\Dnull$, by (a).
\ps
(c)
It follows from (b) that the rational action $\brho$ of $G$ on $\bX(S)$ has the property, that $\tilde\rho_{S}$ defines an open immersion $G \times \bXnull \into G\times \bX(S)$. Now Theorem~\ref{thm3} follows from Lemma~\ref{last.lem}, setting $Y:= \bX(S)$.
\end{proof}

\ps
%%%%%%%%%%%%%%%%%%%%%%%%%%%
\subsection{Normal and  smooth models}
If $X$ is an irreducible $G$-variety, i.e. a variety with a regular action of $G$, then it is well-known that the normalization $\tilde X$ has a unique structure of a $G$-variety such that the normalization map $\eta\colon \tilde X \to X$ is $G$-equivariant. If $X$ is reducible, $X = \bigcup_{i} X_{i}$, we denote by $\tilde X$ the disjoint union of the normalizations of the irreducible components $X_{i}$, $\tilde X = \dot\bigcup_{i}\tilde X_{i}$, and by $\eta\colon \tilde X \to X$ the obvious morphism which will be called the {\it normalization of $X$}. The proof of the following assertion is not difficult.
\begin{prop}
Let $X$ be a $G$-variety and $\eta\colon \tilde X \to X$ its normalization. Then there is a unique regular $G$-action on $\tilde X$ such that $\eta$ is $G$-equivariant.
\end{prop}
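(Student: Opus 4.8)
The plan is to lift the action morphism $a\colon G\times X \to X$, $(g,x)\mapsto g\cdot x$, to the normalization by means of the universal property of normalization, and then to deduce both the group-action axioms for the lift and its uniqueness formally from the uniqueness clause of that universal property. The decisive structural input is that an algebraic group over the algebraically closed field $\bk$ is smooth, hence normal, and its connected components $G = \dot\bigcup_{j} G_{j}$ are irreducible. Writing $\tilde X = \dot\bigcup_{i}\tilde X_{i}$ for the normalizations of the irreducible components $X_{i}$ of $X$, the second projection $G\times\tilde X \to \tilde X$ is smooth with normal target, so $G\times\tilde X$ is normal; its irreducible components are the pairwise disjoint, clopen products $G_{j}\times\tilde X_{i}$.

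Next I would recall the universal property in the precise form needed: a morphism $f\colon Z \to X$ from a normal variety $Z$ factors uniquely as $f = \eta\circ\tilde f$ through $\eta$ provided every irreducible component of $Z$ is mapped by $f$ dominantly onto an irreducible component of $X$. I apply this to $f := a\circ(\id_{G}\times\eta)\colon G\times\tilde X \to X$, and the main point is to verify the dominance hypothesis on each component $G_{j}\times\tilde X_{i}$. The image $a(G_{j}\times X_{i})$ is the continuous image of the irreducible variety $G_{j}\times X_{i}$, hence irreducible, so its closure lies in a single component $X_{i'}$ of $X$; but for any fixed $g\in G_{j}$ the set $g\cdot X_{i}$ is already a full component of $X$ contained in $X_{i'}$, forcing $g\cdot X_{i}=X_{i'}$ and thus $\overline{a(G_{j}\times X_{i})}=X_{i'}$. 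Since $\eta(\tilde X_{i})$ is dense in $X_{i}$, the component $G_{j}\times\tilde X_{i}$ is mapped by $f$ dominantly onto $X_{i'}$, as required. The universal property therefore furnishes a unique morphism $\tilde a\colon G\times\tilde X \to \tilde X$ with $\eta\circ\tilde a = a\circ(\id_{G}\times\eta)$, which is exactly the statement that $\eta$ is $G$-equivariant for $\tilde a$.

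Finally I would check that $\tilde a$ is a regular action and the unique equivariant one, both of which follow from the uniqueness clause. For the unit axiom, $\tilde a(e,-)$ and $\id_{\tilde X}$ are two morphisms $\tilde X \to \tilde X$ whose composite with $\eta$ equals $\eta$, so they coincide. For associativity, the two morphisms $G\times G\times\tilde X \to \tilde X$ given by $(g,h,\tilde x)\mapsto \tilde a(g,\tilde a(h,\tilde x))$ and $(g,h,\tilde x)\mapsto \tilde a(gh,\tilde x)$ both have composite with $\eta$ equal to $(g,h,\tilde x)\mapsto (gh)\cdot\eta(\tilde x)$; their common source is again normal with components mapping dominantly onto components of $X$, so uniqueness forces them to agree. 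For uniqueness of the action, any regular $G$-action $a'$ on $\tilde X$ rendering $\eta$ equivariant satisfies $\eta\circ a' = a\circ(\id_{G}\times\eta)$, whence $a' = \tilde a$. The step I expect to be the main obstacle is precisely the bookkeeping in the reducible and disconnected case of the second paragraph: establishing that $G\times\tilde X$ is normal with the stated clopen components, and above all verifying the dominance-onto-a-component hypothesis of the universal property, without which the lift need not exist. The smoothness of $G$ is the ingredient that makes both of these go through.
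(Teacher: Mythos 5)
Your proof is correct; the paper itself omits the argument (it only remarks that ``the proof of the following assertion is not difficult''), and your route via the universal property of normalization --- using smoothness of $G$ to get normality of $G\times\tilde X$ and the permutation of irreducible components by each $\rho_{g}$ to verify the dominance hypothesis --- is the standard argument the author presumably has in mind. The reducible/disconnected bookkeeping you flag as the main obstacle is handled correctly, since a normal variety is the disjoint union of its irreducible components, so the factorization can be assembled component by component without any gluing issue.
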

It is clear that for any $G$-variety $X$ the open set $X_{\text{\it smooth}}$ of smooth points is stable under $G$. Thus smooth models for a rational $G$-action always exist.

The next result, the {\it equivariant resolution of singularities}, can be found in \name{Koll\'ar}'s book \cite{Ko2007Lectures-on-resolu}. He shows in Theorem~3.36 that in characteristic zero there is a functorial resolution of singularities $\BR(X) \colon X' \to X$ which commutes with surjective smooth morphisms. This implies (see his Proposition~3.9.1) that every action of an algebraic group on $X$ lifts uniquely to an action on $X'$.
\begin{prop}
Assume $\Char \bk = 0$, and 
let $X$ be a $G$-variety. Then there is a smooth $G$-variety $Y$ and a proper birational $G$-equivariant morphism $\phi\colon Y \to X$.
\end{prop}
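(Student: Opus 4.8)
The plan is to lift the given regular $G$-action up the functorial resolution $\phi\colon\BR(X)\to X$ and then take $Y:=\BR(X)$. Since $\Char\bk=0$, the group $G$ is smooth (Cartier), so the two structure morphisms $p\colon G\times X\to X$, $p(g,x)=x$, and $a\colon G\times X\to X$, $a(g,x)=g\cdot x$, are both smooth and surjective: indeed $p$ is the base change of the structure morphism $G\to\operatorname{Spec}\bk$, and $a=p\circ\theta$ where $\theta\colon G\times X\simto G\times X$, $(g,x)\mapsto(g,g\cdot x)$, is an isomorphism. The essential input from Kollár's Theorem~3.36 is that the resolution functor $\BR$ commutes with smooth surjective morphisms, i.e. for such a morphism $f\colon Z\to X$ there is a canonical isomorphism $\BR(Z)\cong Z\times_{X}\BR(X)$ over $Z$. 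Applied to $p$ this gives $\BR(G\times X)\cong G\times\BR(X)=G\times Y$, the resolution morphism being $\id_{G}\times\phi$; applied to $G\times G\times X\to X$ it gives $\BR(G\times G\times X)\cong G\times G\times Y$, and so on.

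Next I would construct the action. Applying the same functoriality to the smooth surjective morphism $a$ realizes $\BR(G\times X)$ as the fibre product $(G\times X)\times_{a,X}\BR(X)$; its second projection is a morphism $\BR(a)\colon\BR(G\times X)\to Y$ satisfying $\phi\circ\BR(a)=a\circ r$, where $r\colon\BR(G\times X)\to G\times X$ is the resolution morphism. Under the identification $\BR(G\times X)\cong G\times Y$ of the previous step (for which $r=\id_{G}\times\phi$), the morphism $\BR(a)$ becomes the desired $\tilde a\colon G\times Y\to Y$, and the relation reads $\phi\circ\tilde a=a\circ(\id_{G}\times\phi)$. This is precisely the statement that $\phi$ is $G$-equivariant once $\tilde a$ is known to be an action. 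It remains only to check that $\tilde a$ is a group action.

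The main point, and the step I expect to require the most care, is to verify the unit and associativity axioms for $\tilde a$ rather than merely the equivariance of $\phi$. Here one cannot argue by naive uniqueness of lifts, since $\phi$ is far from injective; the argument must instead exploit that $\BR$ is a genuine functor on the category of varieties with smooth surjective morphisms. The two compositions $G\times G\times X\to X$ expressing associativity, $(g,h,x)\mapsto(gh)\cdot x$ and $(g,h,x)\mapsto g\cdot(h\cdot x)$, together with the unit morphism $X\to X$, are built entirely from the smooth surjective morphisms $a$, $p$ and $m\times\id_{X}$, where $m\colon G\times G\to G$ is the multiplication (itself smooth and surjective, since $(g,h)\mapsto(gh,h)$ is an automorphism of $G\times G$ followed by a projection). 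Applying $\BR$ to these commutative diagrams, using the identifications $\BR(G\times X)\cong G\times Y$ and $\BR(G\times G\times X)\cong G\times G\times Y$ above, transports them to the corresponding commutative diagrams upstairs, which are exactly the unit and associativity axioms for $\tilde a$. This is the content of Kollár's Proposition~3.9.1, and it is what forces the use of a \emph{functorial} resolution rather than an arbitrary equivariant one.

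Finally I would assemble the conclusion: $Y=\BR(X)$ is smooth by construction, the morphism $\phi$ is proper and birational because $\BR(X)\to X$ is a resolution of singularities, and $\phi$ is $G$-equivariant by the computation in the second paragraph. Note that no irreducibility or connectedness hypothesis enters, since $\BR$ is applied to $X$ as a whole; in particular the argument is insensitive to $G$ permuting the irreducible components of $X$, which is why this approach, unlike the normalization construction, handles the reducible case without further comment.
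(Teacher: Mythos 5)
Your proposal is correct and follows exactly the route the paper takes: the paper gives no independent argument but simply invokes Koll\'ar's Theorem~3.36 (functorial resolution commuting with smooth surjective morphisms) and his Proposition~3.9.1 (unique lifting of group actions), and your write-up is precisely the content of that proposition, using the same inputs ($G$ smooth in characteristic zero, $p$ and $a$ smooth surjective via the shearing isomorphism, and functoriality applied to the unit and associativity diagrams). Nothing further is needed.
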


\subsection{Projective models}
The next results show that there are always smooth projective models for connected algebraic groups $G$. More precisely, we have the following propositions.

\begin{prop}
Let $G$ be a connected algebraic group acting on a normal variety $X$. Then there exists an open cover of $X$ by quasi-projective $G$-stable varieties.
\end{prop}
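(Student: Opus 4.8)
The statement is local on $X$: it suffices to produce, for each point $x \in X$, a $G$-stable quasi-projective open neighbourhood $W_x$, since the $W_x$ then form the required cover. The plan is to reduce first to the case of a connected \emph{linear} group, which is the heart of the matter (Sumihiro's theorem), and then to deduce the general case. For the reduction I would invoke Chevalley's structure theorem $1 \to L \to G \to A \to 1$, with $L$ connected linear and $A$ an abelian variety: since the $A$-direction produces complete orbits, the genuine content sits in the linear part $L$, and I describe the argument assuming $G$ itself is connected and linear.

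So assume $G$ is connected linear and fix $x$. First I would choose an affine open neighbourhood $X_0 \ni x$. The geometric input is that, since $X$ is normal and $X_0$ is affine, the complement $X \setminus X_0$ has pure codimension one (a standard property of affine opens in normal varieties); it thus defines an effective Weil divisor $D$ with $X_0 = X \setminus \operatorname{Supp}(D)$, and by normality the divisorial sheaf $\mathcal{O}_X(D)$ is reflexive of rank one and carries a canonical section $s$ whose non-vanishing locus is exactly $X_0$. I would then pass to the $G$-stable open $\Omega := \bigcup_{g\in G} g\cdot X_0$, which contains $x$, and aim to show that $\Omega$ contains a $G$-stable quasi-projective open neighbourhood of $x$.

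The key step is linearization. Using that $G$ is connected linear (so that $\operatorname{Pic}(G)$ is finite), I would show that after replacing $D$ by a suitable effective divisor supported on $\Omega\setminus X_0$ and $\mathcal{O}(D)$ by a power, the sheaf $\mathcal{O}_\Omega(nD)$ admits a $G$-linearization. The crucial finiteness is that the $G$-orbit of any section lies in a finite-dimensional $G$-invariant subspace of $H^{0}(\Omega,\mathcal{O}(nD))$, which holds because the action is algebraic and $G$ is an algebraic group. I would then pick functions $f_1,\dots,f_r \in \OOO(X_0)$ generating its coordinate ring, view the $f_i s^{\otimes n}$ (for $n\gg 0$) as global sections, and let $V$ be the finite-dimensional $G$-submodule they generate together with $s^{\otimes n}$. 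By construction $V$ is base-point free on $\Omega=\bigcup_g gX_0$ and separates points and tangent vectors on $X_0$, hence on all of $\Omega$ by $G$-equivariance. The induced morphism $\Omega \to \PP(V^{*})$ is $G$-equivariant for the linear $G$-action on $\PP(V^{*})$ coming from the $G$-module $V$, and it is an immersion on a $G$-stable open subset $\Omega' \ni x$; thus $\Omega'$ is a $G$-stable quasi-projective open neighbourhood of $x$, settling the linear case.

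The main obstacle throughout is exactly this linearization-and-finiteness step: one must genuinely manufacture a $G$-linearized line bundle out of the a priori non-invariant boundary divisor $D$ (passing to a power and to the $G$-stable open $\Omega$ to absorb the codimension-one discrepancy between $D$ and its translates), while keeping the span of the translated sections finite-dimensional. For a non-linear connected $G$ this step fails at the level of line bundles, because $\operatorname{Pic}(G)$ can be positive-dimensional; it is precisely for this reason that the Chevalley reduction to the linear part $L$ must be carried out before the construction above can be run, and that reduction is the second delicate point of the proof.
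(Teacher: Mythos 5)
The paper itself gives no proof of this proposition: it only records that the connected linear case is Sumihiro's theorem and that the extension to an arbitrary connected $G$ is due to Brion, so your proposal has to be judged against those arguments. Your sketch of the linear case follows Sumihiro's strategy (boundary divisor of a dense affine open, linearization of $\mathcal{O}(nD)$ using finiteness of $\operatorname{Pic}(G)$ for connected linear $G$, a finite-dimensional $G$-submodule of sections, an equivariant map to $\PP(V^{*})$), and that outline is the right shape. One step is too quick, though: knowing that the sections separate points and tangent vectors on $X_0$ and that the map is $G$-equivariant does not yield an immersion on $\Omega=\bigcup_{g}gX_0$, because two points lying in different translates $gX_0$ and $hX_0$ may still be identified; the passage from ``immersion on $X_0$'' to ``immersion on a $G$-stable open set containing $x$'' is precisely where Sumihiro's proof has to do extra work, and your appeal to ``$G$-equivariance'' does not supply it.

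The more serious gap is the reduction to the linear case. You invoke Chevalley's exact sequence $1\to L\to G\to A\to 1$ and assert that the content sits in the linear part, but the proposition for $G$ does not follow formally from the proposition for $L$: an $L$-stable quasi-projective open subset need not be $G$-stable, and, as you yourself observe, the linearization step collapses for non-affine $G$ since $\operatorname{Pic}(G)$ is then no longer finite. Brion's proof of the general connected case is not a routine d\'evissage along Chevalley's sequence; it requires genuinely new input (the theorem of the square for the abelian-variety part and a careful study of how $G$ moves divisors on the normal variety $X$). As written, your argument establishes (modulo the point in the previous paragraph) only the case already covered by Sumihiro, while the extension to arbitrary connected $G$ --- which is exactly what the proposition claims beyond Sumihiro and what the paper attributes to Brion --- remains an unproved assertion flagged as ``delicate'' rather than carried out.
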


\begin{prop}
Let $G$ be a connected algebraic group acting on a normal quasi-projective variety $X$. Then there exists a $G$-equivariant embedding into a projective $G$-variety.
\end{prop}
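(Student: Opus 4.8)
The plan is to realize $X$ as a $G$-stable locally closed subvariety of some projective space $\PP(V)$ on which $G$ acts through an algebraic homomorphism $G \to \mathrm{PGL}(V)$; then the closure $\bar X := \overline{X} \subset \PP(V)$ is automatically $G$-stable and projective, so that the inclusion $X \into \bar X$ is the desired $G$-equivariant embedding into a projective $G$-variety. Everything therefore reduces to producing a $G$-equivariant morphism $X \to \PP(V)$, with $G$ acting (projectively) linearly on $\PP(V)$, whose restriction to $X$ is a locally closed immersion.

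First I would fix an ample line bundle $L$ on $X$, which exists since $X$ is quasi-projective. Replacing $L$ by a high tensor power, we may assume $L$ is very ample, so that the complete linear system embeds $X$ into $\PP(H^0(X,L)^{*})$. The point is that this embedding need not be equivariant: the $G$-action moves $L$ around inside $\mathrm{Pic}(X)$, and only a $G$-invariant (projectivized) linear system will serve. The mechanism that repairs this is a $G$-linearization of a power of $L$.

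The heart of the argument, and the step I expect to be the main obstacle, is this linearization. Writing $a,p_X\colon G \times X \to X$ for the action and the projection, the obstruction to $G$-linearizing $L$ is the class $a^{*}L \otimes p_X^{*}L^{-1} \in \mathrm{Pic}(G \times X)$. Using normality of $X$ together with the product structure of the Picard group of $G \times X$, this class is pulled back from a class $M \in \mathrm{Pic}(G)$, and one checks that a suitable power of $L$ becomes $G$-linearizable as soon as that power of $M$ becomes trivial. For $G$ connected and linear this succeeds because $\mathrm{Pic}(G)$ is finite, so $M$ is torsion. The general connected case I would reduce to the linear one by means of the \name{Chevalley} structure sequence $1 \to N \to G \to A \to 1$, with $N$ linear and $A$ an abelian variety, in combination with the open-cover proposition above, at the cost of obtaining only a projective representation $G \to \mathrm{PGL}(V)$ rather than an honest linear one. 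Since the statement asks merely for a projective $G$-variety, this suffices.

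Finally, once a power $L^{\otimes n}$ is (projectively) $G$-linearized and very ample, $G$ acts on the space of global sections, and by local finiteness of algebraic group actions on cohomology I would choose a finite-dimensional $G$-stable subspace $V \subset H^0(X, L^{\otimes n})$ still large enough to separate points and tangent vectors of $X$. The induced morphism $X \to \PP(V^{*})$ is then a $G$-equivariant locally closed immersion, and passing to its closure in $\PP(V^{*})$ completes the proof as described in the first paragraph.
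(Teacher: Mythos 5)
The paper itself offers no proof of this proposition: it only cites \name{Sumihiro} for connected \emph{linear} $G$ and \name{Brion} for the general connected case. Your sketch of the linear case is essentially Sumihiro's argument and is sound as an outline: the obstruction class $a^{*}L\otimes p_X^{*}L^{-1}$ is pulled back from $\operatorname{Pic}(G)$ (using normality of $X$ and the structure of $\operatorname{Pic}(G\times X)$), $\operatorname{Pic}(G)$ is finite for $G$ connected linear, so a power of $L$ is $G$-linearizable, and a finite-dimensional $G$-stable space of sections yields an equivariant locally closed immersion into $\PP(V^{*})$ whose closure is the desired projective $G$-variety. (You are silently skating over the cocycle condition in the linearization and the precise decomposition of $\operatorname{Pic}(G\times X)$, but those are standard.)

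The genuine gap is in your reduction of the general connected case to the linear one. Your stated goal --- to realize $X$ as a $G$-stable locally closed subvariety of a projective space $\PP(V)$ on which $G$ acts through a homomorphism $G\to\mathrm{PGL}(V)$ --- is unattainable when $G$ is not affine. Take $G$ an abelian variety acting on $X=G$ by translation: $\mathrm{PGL}(V)$ is affine, so every homomorphism from the connected complete group $G$ to it is trivial, and a $G$-equivariant morphism $X\to\PP(V)$ would then be constant on the dense orbit, hence constant, and never an immersion. So the concession ``only a projective representation rather than an honest linear one'' does not rescue the plan; \emph{no} projective representation will do. What \name{Brion} actually proves is a $G$-equivariant embedding of $X$ into the projectivization of a $G$-homogeneous (linearized) vector bundle over the abelian variety $A=G/G_{\mathrm{aff}}$ coming from the Chevalley sequence; this projectivization is a projective $G$-variety, but it is not a projective space with a (projective-)linear $G$-action. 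Your argument therefore establishes the proposition only for linear $G$; for the general connected case the target of the embedding must be changed along these lines.
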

\begin{proof}[Outline of Proofs]
Both propositions are due to \name{Sumihiro} in case of a connected linear algebraic group $G$ \cite{Su1974Equivariant-comple,Su1975Equivariant-comple}. 
They were generalized to a connected algebraic group $G$ by \name{Brion} in \cite[Theorem~1.1 and Theorem~1.2]{Br2010Some-basic-results}.
\end{proof}
In this context let us mention the following 
{\it equivariant \name{Chow}-Lemma}. For a connected linear algebraic group $G$ it was proved by \name{Sumihiro} \cite{Su1974Equivariant-comple} and later generalized to the non-connected case by \name{Reichstein-Youssin} \cite{ReYo2002Equivariant-resolu}. It implies that projective models always exist for linear algebraic groups $G$.

\begin{prop}[\protect{\cite[Theorem~2]{Su1974Equivariant-comple}, \cite[Proposition~2]{ReYo2002Equivariant-resolu}}] Let $G$ be a linear algebraic group. 
For every $G$-variety $X$ there exists a quasi-projective $G$-variety $Y$ and a proper birational $G$-equivariant morphism $Y \to X$ which is an isomorphism on a $G$-stable open dense subset $U \subset Y$.
\end{prop}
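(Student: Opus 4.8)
The plan is to prove the statement first for connected $G$, where the two preceding propositions are at our disposal, and then to deduce the general case by an induction over the finite quotient $G/G^{\circ}$. Throughout I reduce to $X$ \emph{normal}: the normalization $\eta\colon\tilde X\to X$ is finite, hence proper, and $G$-equivariant (by the normalization proposition above), and it is an isomorphism over the $G$-stable open dense locus where $X$ is normal. Since a composite of proper (resp.\ birational, resp.\ ``isomorphism over a $G$-stable open dense set'') morphisms is again of that kind, it suffices to construct the desired $Y\to\tilde X$, so I may assume $X$ normal. When $G$ is connected I may moreover assume $X$ irreducible, because a connected group carries each irreducible component into itself, so the components are $G$-stable, and a finite disjoint union of quasi-projective varieties is quasi-projective.

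\emph{Connected case.} By the first of the two propositions I cover $X$ by finitely many $G$-stable quasi-projective open sets $V_{1},\dots,V_{n}$, and by the second I embed each $V_{i}$ $G$-equivariantly into a projective $G$-variety $\bar V_{i}$. Set $V:=\bigcap_{i}V_{i}$, an open dense $G$-stable subset, and form the $G$-equivariant locally closed immersion $j\colon V\to X\times\prod_{i}\bar V_{i}$, $v\mapsto(v,(v)_{i})$. Let $Y:=\overline{j(V)}$. It is $G$-stable, since $j$ is equivariant and $G$ acts regularly on the product. The first projection $p:=\pr_{X}|_{Y}\colon Y\to X$ is proper (as $\prod_{i}\bar V_{i}$ is projective), birational (it is an isomorphism over $V$, where $j$ is a section), and $G$-equivariant, while the second projection $Y\to\prod_{i}\bar V_{i}$ is a locally closed immersion by the classical \name{Chow} argument (here one uses that the $V_{i}$ cover $X$); hence $Y$ is quasi-projective. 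This settles the connected case.

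\emph{General case.} Put $N:=G^{\circ}$ and choose coset representatives $e=g_{1},\dots,g_{r}$ for $G/N$. Applying the connected case to $N$ yields an $N$-equivariant proper birational $\pi\colon Y_{0}\to X$ with $Y_{0}$ quasi-projective, an isomorphism over an $N$-stable open dense $V_{0}$; replacing $V_{0}$ by $\bigcap_{i}g_{i}V_{0}$ (each $g_{i}V_{0}$ is again $N$-stable because $N$ is normal) I may take $V_{0}$ to be $G$-stable. For each $i$ let $\pi_{i}\colon Y_{0}^{(i)}\to X$ be the $g_{i}$-conjugate of $\pi$, that is $Y_{0}$ with the $N$-action twisted by $n\mapsto g_{i}^{-1}ng_{i}$ and structure map $y\mapsto g_{i}\cdot\pi(y)$; each $\pi_{i}$ is $N$-equivariant, proper, birational, and an isomorphism over $V_{0}$. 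Over $X$ these assemble into the fibre product $Y_{0}^{(1)}\times_{X}\cdots\times_{X}Y_{0}^{(r)}$, which is proper over $X$ and closed in $\prod_{i}Y_{0}^{(i)}$, hence quasi-projective. On $\prod_{i}Y_{0}^{(i)}$ the group $G$ acts by the co-induced action, permuting the factors through its action on $G/N$; this action preserves the fibre product. Taking $Y$ to be the closure therein of the image of $V_{0}$ under its canonical ($G$-equivariant) section produces a $G$-stable, quasi-projective variety with a proper birational $G$-equivariant morphism to $X$ that is an isomorphism over the $G$-stable open dense set $V_{0}$; composing with $\eta$ gives the asserted $Y\to X$.

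I expect the last step to be the main obstacle. The delicate points are: (i) defining the $G$-action on $\prod_{i}Y_{0}^{(i)}$ as an honest regular action restricting to the given $N$-actions on the factors --- this is exactly the co-induction of a group action along the finite-index inclusion $N\subset G$, routine in principle but requiring care with the twists $n\mapsto g_{i}^{-1}ng_{i}$ and with the relabelling $g\cdot g_{i}\in g_{\sigma(i)}N$; and (ii) checking that this action preserves the fibre product over the $G$-variety $X$ and that the canonical section over $V_{0}$ is genuinely $G$-equivariant, so that its closure $Y$ is $G$-stable. Once these are in place, properness of $Y\to X$ follows because the fibre product is proper over $X$, and birationality because the section is an isomorphism onto its image over $V_{0}$, exactly as in the connected case.
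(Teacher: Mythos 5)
The paper does not actually prove this proposition: it is stated purely as a citation to \name{Sumihiro} and \name{Reichstein--Youssin}, so there is no internal argument to compare against. What you have done is supply a self-contained derivation from the two preceding propositions (the quasi-projective $G$-stable cover and the projective equivariant completion, i.e.\ \name{Brion}'s versions of \name{Sumihiro}'s theorems), and your route is essentially the one those references themselves follow: the connected case is the classical Chow-lemma construction made equivariant (closure of the graph of $V\to X\times\prod_i\bar V_i$), and the non-connected case is handled by co-inducing along $G^{\circ}\subset G$ via the fibre product of the $g_i$-conjugates. I checked the points you flag as delicate and they do go through: the twisted structure maps satisfy $\pi_{\sigma(i)}\bigl(n_i(g)\cdot y\bigr)=g\cdot\pi_i(y)$ with $n_i(g)=g_{\sigma(i)}^{-1}gg_i\in G^{\circ}$, the cocycle identity $n_i(g'g)=n_{\sigma_g(i)}(g')\,n_i(g)$ makes this a genuine action, it is regular because $\sigma_g$ and $i\mapsto n_i(g)$ are constant, respectively regular, on each connected component of $G$, and equivariance of the section over $V_0$ follows from injectivity of the $\pi_i$ over $V_0$. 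Your argument has the added benefit of not using linearity of $G$ at all, since \name{Brion}'s propositions do not require it.

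Two small points deserve attention. First, the step you dispose of with ``the classical Chow argument'' is the only place where real work hides: to see that $q\colon Y\to\prod_i\bar V_i$ is an immersion one must show not merely that $q|_{p^{-1}(V_i)}$ is a closed immersion into the open set $q_i^{-1}(V_i)$ (which follows because $p$ and $q_i$ agree on $p^{-1}(V_i)$, being equal on the dense subset $j(V)$ and mapping to the separated $\bar V_i$), but also that $q^{-1}\bigl(q_i^{-1}(V_i)\bigr)=p^{-1}(V_i)$, so that the closed-immersion property is local on the target; this is the content of the proof of Chow's lemma in EGA~II.5.6 and should at least be referenced. Second, your reduction ``connected $\Rightarrow$ may assume $X$ irreducible'' via disjoint unions is only valid because you normalized first (distinct irreducible components of a normal variety are disjoint); as written the justification mentions only $G$-stability of the components, and for a non-normal $X$ with intersecting components the disjoint union of the $Y_i$ would not map to $X$ by an isomorphism over any open set meeting the intersections. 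Since you do normalize at the outset this is a presentational issue, not a gap.
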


\ps
%%%%%%%%%%%%%%%%%%%%%%%%%%%%
\subsection{Proof of Theorem~\ref{rational-regular.thm}}\label{proof.subsec}
We start with a rational action $\rho\colon G \to \Bir(X)$ of an algebraic group $G$ on a variety $X$, and we assume that there is a dense subgroup $\Gamma \subseteq G$ such that $\rho(\Gamma) \subset \Aut(X)$. 
\ps
(a) 
We first claim that the rational $G$-action on the open dense set $\Xr\subset X$ is regular. For every $x \in \Xr$ there is a $g \in \Gamma$ such that $\tilde\rho$ is biregular in $(g,x)$. Since, by assumption, the $\rho_{h}$ are biregular on $X$ for all $h \in \Gamma$ it follows from Lemma~\ref{biregular.lem}(\ref{b}) that $\tilde\rho$ is biregular in $(g',x)$ for any $g' \in \Gamma$. Moreover,  by Proposition~\ref{main.prop}(\ref{bb}), we have $g'\cdot x \in \Xr$, hence $\Xr$ is stable under $\Gamma$. 

By Theorem~\ref{thm3} we have a $G$-equivariant open immersion $\Xr \into Y$ where $Y$ is a variety with a regular $G$-action. Since the complement $C:=Y \setminus \Xr$ is closed and $\Gamma$-stable we see that $C$ is stable under $\bar\Gamma = G$, hence the claim.
\ps
(b)
From (a) we see that the rational map $\rho\colon G \times X \dto X$ has the following properties:
\be
\item[(i)] There is a dense open set $\Xr \subset X$ such that $\rho$ is regular on $G \times \Xr$.
\item[(ii)] For every $g \in \Gamma$ the rational map $\rho_{g}\colon X \to X$, $x\mapsto\rho(g,x)$, is a regular isomorphism.
\ee
Now the following lemma implies that $\rho$ is a regular action in case $X$ is affine, proving Theorem~\ref{rational-regular.thm}.
\qed
\begin{lem}\label{lem1}
Let $X,Y,Z$ be varieties and let $\phi\colon X \times Y \dto Z$ be a rational map where $Z$ is affine. Assume the following:
\be
\item There is an open dense set $U \subset Y$ such that $\phi$ is defined on $X \times U$;
\item There is a dense set $X' \subseteq X$ such that the induced maps $\phi_{x}\colon \{x\}\times Y \to Z$ are morphisms for all $x \in X'$
\ee
Then $\phi$ is a regular morphism.
\end{lem}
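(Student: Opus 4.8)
The plan is to reduce to the case where $X$ and $Y$ are affine and $Z=\mathbb{A}^{1}$, and then settle that case by a direct computation with coordinate rings. First I would reduce the target: since $Z$ is affine we may embed $Z\subseteq\mathbb{A}^{n}$, and a rational map into $Z$ is regular if and only if each of its components $z_{i}\circ\phi\colon X\times Y\dto\mathbb{A}^{1}$ is regular. Each component again satisfies (a) and (b), so it suffices to treat $Z=\mathbb{A}^{1}$, i.e.\ a rational function $f$ on $X\times Y$. Next I would reduce the source: regularity of $f$ is a local property on $X\times Y$, and the products $X_{\beta}\times V_{\alpha}$ of affine open subsets $X_{\beta}\subseteq X$ and $V_{\alpha}\subseteq Y$ form an affine open cover. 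On such a piece hypothesis (a) survives because $U\cap V_{\alpha}$ is dense and open in $V_{\alpha}$, and hypothesis (b) survives because $X'\cap X_{\beta}$ is dense in $X_{\beta}$. Hence we may assume $X$ and $Y$ affine.

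Now set $A:=\OOO(X)$, $B:=\OOO(Y)$, so that $R:=\OOO(X\times Y)=A\otimes_{\bk}B$, a reduced ring. The first step is to produce a single denominator living only in the $Y$-direction. Using that $U$ is dense and open, prime avoidance yields an $s\in B$ vanishing on $Y\setminus U$ but on no irreducible component of $Y$; then $Y_{s}:=\{s\neq 0\}$ is dense, $Y_{s}\subseteq U$, and $1\otimes s$ is a non-zerodivisor of $R$. By hypothesis (a) the function $f$ is regular on $X\times Y_{s}$, so $f=g/s^{N}$ for some $g\in R$ and $N\ge 0$. The goal becomes to show $g\in s^{N}R$, for then $f\in R$ and we are done.

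The last step uses hypothesis (b) together with density. For $x\in X$ let $\mathrm{ev}_{x}\colon R=A\otimes_{\bk}B\to B$ be evaluation in the first factor; it extends to $R_{s}\to B_{s}$ and sends $f$ to $f(x,-)=\mathrm{ev}_{x}(g)/s^{N}$. For $x\in X'$ hypothesis (b) says $f(x,-)\in B$, i.e.\ $\mathrm{ev}_{x}(g)\in s^{N}B$. Passing to $\bar B:=B/s^{N}B$ and using flatness of $A$ over $\bk$, we obtain $\bar g\in A\otimes_{\bk}\bar B$ with $\mathrm{ev}_{x}(\bar g)=0$ for all $x\in X'$. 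Writing $\bar g=\sum_{i}a_{i}\otimes\beta_{i}$ with the $\beta_{i}\in\bar B$ linearly independent over $\bk$, the relation $\sum_{i}a_{i}(x)\beta_{i}=0$ forces $a_{i}(x)=0$ for every $i$ and every $x\in X'$; since $X'$ is dense and $A$ is reduced, each $a_{i}=0$, whence $\bar g=0$ and $g\in s^{N}R$.

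I expect the main subtlety to be organizing the denominator cleanly: the whole argument hinges on the fact that hypothesis (a) confines the indeterminacy of $f$ to the $Y$-direction, which is exactly what permits choosing the denominator $s$ from $B$ alone and then testing divisibility slice by slice via (b). The reductions to $Z=\mathbb{A}^{1}$ and to affine $X,Y$ are routine but must be checked to respect both hypotheses; once in the affine setting, the computation above needs neither normality nor irreducibility, relying only on the fact that a regular function vanishing on a dense subset of a reduced variety is zero.
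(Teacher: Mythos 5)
Your proof is correct, and the overall strategy coincides with the paper's: reduce to a rational function on a product of affine varieties, clear a denominator coming from $\OOO(Y)$ alone, and then exploit a tensor decomposition together with the density of $X'$. The difference lies in how the final linear algebra is organized. The paper writes $f^{k}F=\sum_{i}h_{i}\otimes f_{i}$ with the $h_{i}\in\OOO(X)$ linearly independent, and must then prove a separate claim -- by an inductive construction -- that one can find points $x_{1},\dots,x_{n}\in X'$ making the matrix $(h_{i}(x_{j}))$ invertible, so that the $f_{i}/f^{k}$ are recovered as linear combinations of the regular functions $F_{x_{j}}$. You instead reduce the numerator modulo $s^{N}$ and put the linear independence on the $\OOO(Y)$-side, writing $\bar g=\sum_{i}a_{i}\otimes\beta_{i}$ with the $\beta_{i}$ independent in $B/s^{N}B$; then the evaluations at $x\in X'$ kill the coefficients $a_{i}$ pointwise, and density plus reducedness of $X$ finishes immediately. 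Your version buys a shorter endgame (no auxiliary claim, no induction) at the cost of the slightly more careful bookkeeping with $B/s^{N}B$; it also makes explicit, via prime avoidance, why one may take $U=Y_{s}$ with $s$ a non-zerodivisor, a point the paper passes over with ``we can also assume.'' Both arguments use only that the varieties are reduced and that a regular function vanishing on a dense subset is zero, so neither is more general than the other.
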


\begin{proof} 
We can assume that $Z=\Aone$, so that $\phi=F$ is a rational function on $X \times Y$. We
can also assume that $X,Y$ are affine and that $U = Y_{f}$ with a non-zero divisor $f \in \OOO(Y)$. This implies that $f^{k}F \in \OOO(X \times Y) = \OOO(X)\otimes\OOO(Y)$ for some $k \geq 0$. Write $f^{k}F = \sum_{i=1}^{n} h_{i}\otimes f_{i}$ with 
$\bk$-linearly independent $h_{1},\ldots,h_{n}\in\OOO(X)$. Setting $F_{x}(y):=f(x,y)$ for $x \in X$, the assumption implies that $F_{x} = \sum_{i = 1}^{n}h_{i}(x) \frac{f_{i}}{f^{k}}$ is a regular function on $Y$ for all $x \in X'$. 

We claim that there exist $x_{1},\ldots,x_{n}\in X'$ such that the $n\times n$-matrix $(h_{i}(x_{j}))_{i,j=1}^{n}$ is invertible. This implies that the rational functions $\frac{f_{i}}{f^{k}}$ are $\bk$-linear combinations of the $F_{x_{i}}=f(x_{i},y)\in\OOO(Y)$. Hence they are regular, and thus $F$ is regular. The lemma follows.

It remains to prove the claim.
Assume that we have found $x_{1},\ldots,x_{m} \in X'$ ($m<n$) such that the $m\times m$-matrix $(h_{i}(x_{j}))_{i,j=1}^{m}$ is invertible. Then there are uniquely defined $\lambda_{1},\ldots,\lambda_{m}\in \bk$ such that $h_{m+1}(x_{i}) = \sum_{j=1}^{m}\lambda_{j}h_{j}(x_{i})$ for $i=1,\ldots,m$. Since $h_{1},\ldots,h_{m},h_{m+1}$ are linearly independent, it follows that  $h_{m+1}\neq \sum_{j=1}^{m}\lambda_{j}h_{j}$. This implies that there exists $x_{m+1}\in X'$ such that $h_{m+1}(x_{m+1})\neq \sum_{j=1}^{m}\lambda_{j}h_{j}(x_{m+1})$, and so the matrix  $(h_{i}(x_{j}))_{i,j=1}^{m+1}$ is invertible. Now the claim follows by induction.
\end{proof}

\vskip1cm

\bigskip\bigskip
\end{document}